\numberwithin{equation}{section}
\newtheorem{thm}{Theorem}[section]
\newtheorem{cor}[thm]{Corollary}
\newtheorem{conj}[thm]{Conjecture}
\newtheorem{lemma}[thm]{Lemma}
\newcommand{\ssp}{\hspace{1pt}}
\newcommand{\del}{\backslash}
\newcommand{\cl}{\hbox{\rm cl}}
\newcommand{\ch}{\hbox{\rm ch}}
\newcommand{\ba}{\hbox{\rm b}}
\newcommand{\mbA}{\mathbf{A}}
\newcommand{\mbB}{\mathbf{B}}
\newcommand{\mcA}{\mathcal{A}}
\newcommand{\mcB}{\mathcal{B}}
\newcommand{\mcZ}{\mathcal{Z}}
\title{Sparse Paving Matroids, Basis-Exchange Properties, and Cyclic
  Flats}
\author[J.~Bonin]{Joseph E.~Bonin}
\address{Department of Mathematics\\ The George Washington University\\
  Washington, D.C. 20052}
\email{jbonin@gwu.edu}
\date{\today}
\begin{document}

\begin{abstract}
  We provide evidence for five long-standing, basis-exchange
  conjectures for matroids by proving them for the enormous class of
  sparse paving matroids.  We also explore the role that these
  matroids may play in the following problem: as a function of the
  size of the ground set, what is the greatest number of cyclic flats
  that a matroid can have?
\end{abstract}

\maketitle

\markboth{J.~Bonin, \emph{Sparse Paving Matroids, Basis-Exchange
    Properties, and Cyclic Flats}} {J.~Bonin, \emph{Sparse Paving
    Matroids, Basis-Exchange Properties, and Cyclic Flats}}

\section{Introduction}\label{sec:intro}

A matroid is \emph{paving} if the closure of each nonspanning circuit
is a hyperplane; it is \emph{sparse paving} if each nonspanning
circuit is a hyperplane.  Thus, a matroid $M$ of rank $r$ is sparse
paving if and only if each $r$-subset of $E(M)$ is either a basis or a
circuit-hyperplane.  It follows that the class of sparse paving
matroids is dual-closed.  It is easy to show that this class is also
minor-closed.  Sparse paving matroids can also be characterized as the
matroids $M$ for which both $M$ and its dual, $M^*$, are paving.

While paving and sparse paving matroids have received increasing
attention recently (see, e.g., \cite{rota,jerrum,asy,catalog,hvec}),
they have long played important roles in matroid theory.  For
instance, D.~Knuth~\cite{knuth} constructed at least
\begin{equation*}
  \frac{2^{\binom{n}{\lfloor n/2 \rfloor}/2n}}{n!}
\end{equation*}
nonisomorphic sparse paving matroids of rank $\lfloor n/2 \rfloor$ on
$n$ elements; with the upper bound by M.~Piff \cite{piff}, it follows
that the number $g_n$ of nonisomorphic simple matroids on $n$ elements
satisfies
\begin{equation}\label{eq:bound}
  n -\frac{3}{2}\log_2 n + O(\log_2\log_2 n) \leq \log_2\log_2 g_n \leq n
  -\log_2 n + O(\log_2\log_2 n),
\end{equation}
with sparse paving matroids accounting for the lower bound.  Taking
this further, in~\cite{asy}, D.~Mayhew, M.~Newman, D.~Welsh, and
G.~Whittle have conjectured that, asymptotically, almost all matroids
are sparse paving.

The five basis-exchange conjectures treated in this paper, all of
which have been open for decades and have been proven for only a few
classes of matroids, are part of the circle of ideas that revolve
around the well-known symmetric basis-exchange property: for any bases
$B_1$, $B_2$ of a matroid $M$, if $b_1\in B_1-B_2$, then, for some
$b_2\in B_2-B_1$, both $(B_1-b_1)\cup b_2$ and $(B_2-b_2)\cup b_1$ are
also bases of $M$.

The first conjecture concerns the \emph{basis pair graph}, $G(M)$, of
a matroid $M$, which is defined as follows.  The vertices of $G(M)$
are the ordered triples $(A_1,A_2,A_3)$ of subsets of $E(M)$ where
$A_1$ and $A_2$ are disjoint bases of $M$ and $A_3$ is $E(M)-(A_1\cup
A_2)$.  (Thus, the inequality $|E(M)|\geq 2\,r(M)$ must hold in order
for $G(M)$ to have any vertices.)  Two vertices, say $\mbA =
(A_1,A_2,A_3)$ and $\mbB = (B_1,B_2,B_3)$, of $G(M)$ are adjacent if
$\mbB$ can be obtained from $\mbA$ by switching some pair of elements
in two different sets in $\mbA$, that is, if
\begin{equation*}
  |A_1-B_1|+|A_2-B_2|+|A_3-B_3|=2.
\end{equation*}
If $E(M)$ is the disjoint union of two bases of $M$, then $G(M)$ is
isomorphic to the basis-cobasis graph studied by R.~Cordovil and
M.~Moreira~\cite{bc}.  The following conjecture was posed by
M.~Farber~\cite{faber}, who proved it for transversal matroids.
(In~\cite{frs}, M.~Farber, B.~Richter and H.~Shank proved it for
graphic and cographic matroids.)

\begin{conj}\label{conj:faber}
  The basis pair graph of any matroid is connected.
\end{conj}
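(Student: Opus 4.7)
The plan is to decompose the edges of $G(M)$ by type. An edge that switches one element of $A_i$ with one of $A_j$ (where $\{i,j\} \subseteq \{1,2,3\}$) must preserve the fact that $A_1$ and $A_2$ are bases of $M$. Thus a $(1,2)$-edge is a symmetric basis exchange, a $(1,3)$-edge is a single-element basis exchange inside the restriction $M\ssp|\ssp(E(M)-A_2)$, and a $(2,3)$-edge is such an exchange in $M\ssp|\ssp(E(M)-A_1)$. Each of these two restrictions has rank $r(M)$, since its ground set contains the other of $A_1, A_2$ (which is a basis of $M$), so its basis graph is connected by the standard consequence of the symmetric basis-exchange property.

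Given two vertices $\mbA=(A_1,A_2,A_3)$ and $\mbB=(B_1,B_2,B_3)$, I would build a path in $G(M)$ in three stages via a well-chosen \emph{hub basis} $C$ of $M$ that is disjoint from $A_1 \cup B_1$: first, using $(2,3)$-swaps (which fix $A_1$), transform $A_2$ into $C$; next, using $(1,3)$-swaps (which fix $C$), transform $A_1$ into $B_1$; finally, using $(2,3)$-swaps (which fix $B_1$), transform $C$ into $B_2$. Each stage is a sequence of single-element basis exchanges in the relevant restriction and is feasible by the connectivity noted above, provided $A_3$ remains nonempty at every intermediate vertex, which is automatic whenever $|E(M)|>2\,r(M)$.

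The main obstacle is to secure the hub basis $C$ disjoint from $A_1 \cup B_1$; equivalently, to find three pairwise disjoint bases of $M$ that extend $\{A_1,B_1\}$. Edmonds' matroid union theorem characterizes exactly when such a $C$ exists, via a rank condition on $M$, and this condition can fail. In those cases, the plan is to invoke symmetric $(1,2)$-exchanges as detours, replacing $A_1$ or $B_1$ by closely related bases until the union rank condition holds and a hub basis becomes available. The boundary case $|E(M)|=2\,r(M)$ is separate: then $A_3=\emptyset$ at every vertex, only symmetric exchanges are possible, and $G(M)$ coincides with the basis--cobasis graph, so the theorem of Cordovil and Moreira yields connectivity directly. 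The hardest step, and the one I expect to be the main obstacle, is this detour lemma --- showing that symmetric basis exchanges alone always suffice to move $\mbA$ and $\mbB$ to vertices for which a common hub basis exists.
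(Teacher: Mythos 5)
There is a genuine gap --- in fact several --- and you should first note that the statement you are proving is an open conjecture: the paper only establishes it for sparse paving matroids (Theorem~\ref{thm:faber}), using the circuit-hyperplane perturbation lemmas (Lemmas~\ref{lem:chb} and~\ref{lem:swap}) that are special to that class. A blind general proof would be a major result, so the burden on each step is high, and two of your steps do not hold up. First, your dispatch of the boundary case $|E(M)|=2\,r(M)$ is wrong: Cordovil and Moreira \emph{studied} basis--cobasis graphs, but there is no theorem of theirs asserting their connectivity for arbitrary matroids --- that connectivity is precisely (a special case of) the conjecture itself, and it is exactly the case on which the paper's proof spends its first half, via a delicate case analysis of which of four candidate symmetric exchanges can be blocked by circuit-hyperplanes. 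Second, the hub basis $C$ disjoint from $A_1\cup B_1$ requires $E(M)-(A_1\cup B_1)$ to have rank $r(M)$, which forces $|E(M)|\geq |A_1\cup B_1|+r(M)$. Whenever $2\,r(M)\leq |E(M)|<3\,r(M)$ and $A_1\cap B_1$ is small, no hub basis can exist for \emph{any} choice of vertices with those first coordinates, so no amount of detouring by symmetric exchanges can create one; the proposed ``detour lemma'' is not merely unproven but false as a general statement, and the architecture collapses on the entire regime where the conjecture is hardest.

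What survives of your plan is sound but is the easy part: once $A_1=B_1$, connecting $(A_1,A_2,A_3)$ to $(A_1,B_2,B_3)$ by $(2,3)$-swaps is indeed a walk in the basis-exchange graph of $M\,|\,(E(M)-A_1)$, which is connected. The real content of the conjecture is morphing $A_1$ into $B_1$ while maintaining a disjoint companion basis at every step --- a serial symmetric exchange property --- and your proposal defers exactly that to the unproven lemma. The paper's route for sparse paving matroids is instructive by contrast: it reduces the general case to the case $E(M)=A_1\cup A_2$ by first aligning the third coordinates (using that a hyperplane $\cl(A_1-a_1)$ meets $A_3-B_3$ in at most one element), and then, in the disjoint-union case, shows by induction on $|A_1\triangle B_1|$ that among a small explicit list of candidate symmetric exchanges at least one avoids all circuit-hyperplanes, because Lemma~\ref{lem:chb} prevents circuit-hyperplanes from clustering. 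None of that machinery is available for general matroids, which is why the conjecture remains open.
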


The second conjecture involves a family of graphs that we can
associate with a matroid.  Fix an integer $k\geq 2$.  Let $M$ be a
matroid of rank $r$ and let $S$ be a multiset of size $kr$ with
elements in $E(M)$.  Define the graph $G_M(S)$ as follows: the
vertices of $G_M(S)$ are all multisets of $k$ bases of $M$ whose
multiset union is $S$; two vertices are adjacent if one can be
obtained from the other by one symmetric exchange among one pair of
bases in one of the vertices.  Thus, vertices
$\mcA=\{A_1,A_2,\ldots,A_k\}$ and $\mcB=\{B_1,B_2,\ldots,B_k\}$ are
adjacent if, for some bases $B_i,B_j\in\mcB$ and elements $b_i\in
B_i-B_j$ and $b_j\in B_j-B_i$, we obtain $\mcA$ from $\mcB$ by
replacing $B_i$ by $(B_i-b_i)\cup b_j$ and replacing $B_j$ by
$(B_j-b_j)\cup b_i$.  (This graph may be empty.)  The conjecture below
is due to N.~White~\cite[Conjecture 12]{white}.

\begin{conj}\label{conj:white}
  For any matroid $M$ and multiset $S$ of size $k\,r(M)$ with elements
  in $E(M)$ and with $k\geq 2$, the graph $G_M(S)$ is connected.
\end{conj}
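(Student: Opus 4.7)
The plan is to try to establish Conjecture \ref{conj:white} via an inductive approach on matroid size, combined with a reduction to minors. Since the conjecture has been open for decades and is proved only for a handful of structured classes (graphic and cographic when $k=2$, transversal, strongly base-orderable, and --- as this paper will show --- sparse paving matroids), any plausible attack in full generality must either substantially extend an existing technique or introduce a new structural invariant. My plan is therefore more a roadmap than a recipe.

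First, I would fix a distance function $d(\mcA,\mcB)$ on multisets of $k$ bases with the same multiset union $S$, for example $d(\mcA,\mcB)=\min_{\sigma}\sum_{i=1}^{k}|A_i\triangle B_{\sigma(i)}|$ over bijections $\sigma$ between the two multisets. The natural hope is that whenever $\mcA\neq\mcB$ one can perform a symmetric exchange in $\mcA$ that strictly decreases $d$. This is a greedy strategy, and it is known to run into local minima, so the refinement I would pursue is to allow a bounded number of non-decreasing exchanges as long as some secondary invariant (say, a lexicographic refinement of the multiset of pairwise symmetric differences) decreases.

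Second, I would try to peel off elements and reduce to smaller matroids. If an element $e\in E(M)$ appears in all $k$ bases of both $\mcA$ and $\mcB$, contract $e$ and apply induction; if $e$ appears in none, delete it. More generally, if $e$ appears in exactly $t$ bases of both $\mcA$ and $\mcB$ for some $t$ with $0<t<k$, then first rearrange $\mcA$ by symmetric exchanges so that the set of bases containing $e$ matches the corresponding set in $\mcB$. Once this is done, I would split each configuration into the $t$ bases containing $e$ (which, after contracting $e$, become bases in $M/e$) and the $k-t$ bases avoiding $e$ (bases in $M\!\setminus\! e$), and apply the inductive hypothesis to each piece, gluing the resulting paths back together.

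The main obstacle, and the reason the conjecture has resisted proof, is this rearrangement step. Given two multisets with the same multiset union, there is no general mechanism for producing a sequence of pairwise symmetric exchanges that drives the element-by-element occurrence pattern in $\mcA$ toward that of $\mcB$; for $k\ge 3$ a symmetric exchange alters only two bases at once, while the necessary rearrangement may require coordinated change across several bases simultaneously. Unless $M$ carries enough extra structure --- as in the strongly base-orderable case, or in the sparse paving case treated in this paper, where the space of ``bad'' $r$-sets is so thin that every nontrivial swap is forced --- I do not see how to force the reduction through, and I expect any complete resolution of White's conjecture to hinge on a genuinely new idea at exactly this point.
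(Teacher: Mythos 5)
The statement you were asked to address is a conjecture, and it remains open: the paper does not prove it in general, but only for the class of sparse paving matroids (Theorem~\ref{thm:white}), and separately records the known special cases (graphic, cographic, rank-$3$, nullity-$3$). Your submission is candid that it is a roadmap rather than a proof, and your diagnosis of where the difficulty lies is accurate, but the bottom line is that there is no argument here to check: the distance-decreasing greedy step is asserted to run into local minima, the proposed ``secondary invariant'' is never specified or shown to work, and the peeling/rearrangement step --- aligning the occurrence pattern of an element $e$ across the two multisets of bases by symmetric exchanges --- is exactly the step you yourself identify as having no known mechanism. So the proposal neither proves the conjecture nor any new case of it.

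For comparison, the paper's proof in the sparse paving case does follow the general shape of your second idea, but with a different reduction: instead of peeling off a ground-set element, it inducts on $k$ by showing that from any two vertices $\mcA$ and $\mcB$ of $G_M(S)$ one can reach configurations sharing a common basis $A'_1=B'_1$, after which the problem drops to $G_M(S-A'_1)$ with $k-1$ bases. The engine that makes this work is exactly the ``thinness of bad $r$-sets'' you gesture at, made quantitative: in a sparse paving matroid any $r$-set at symmetric-difference distance $2$ from a circuit-hyperplane is a basis (Lemma~\ref{lem:chb}), so for bases $B,B'$, an element $a\in B-B'$, and $X\subseteq B'-B$, at least $|X|-2$ choices of $x\in X$ give a valid symmetric exchange in both directions (Lemma~\ref{lem:swap}). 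The proof then decreases $|A_1\triangle B_1|$ step by step, with an extensive case analysis (on $i=\tfrac12|A_1\triangle B_1|$ being $\geq 3$, $2$, or $1$) to escape precisely the local minima that block the naive greedy argument; the final case ends in a counting contradiction on multiplicities. None of this machinery is available for general matroids, which is consistent with your assessment, but it also means your proposal does not recover even the result the paper actually establishes.
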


Conjecture~\ref{conj:white} is sometimes cast in terms of toric
ideals.  A routine argument shows that the conjecture holds for $M$ if
and only if it holds for $M^*$.  It has been shown for graphic (and so
for cographic) matroids by J.~Blasiak~\cite{blasiak} and for rank-$3$
(and so for nullity-$3$) matroids by K.~Kashiwabara~\cite{kk}.
J.~Herzog and T.~Hibi~\cite{hh} have shown that
Conjecture~\ref{conj:white} is equivalent to its counterpart for
discrete polymatroids.  J.~Schweig~\cite{jay} has proven the
counterpart of the conjecture for certain discrete polymatroids.

While Conjecture~\ref{conj:white} has received most attention,
\cite[Conjecture 12]{white} has three parts, of which the next
conjecture is the strongest.  Consider the graph $G'_M(S)$ in which
$k$-tuples of bases replace multisets of bases.  Thus, its vertices
are all $k$-tuples of bases of $M$ whose multiset union is $S$;
vertices $\mbA=(A_1,A_2,\ldots,A_k)$ and $\mbB=(B_1,B_2,\ldots,B_k)$
are adjacent if, for some integers $i$ and $j$ with $1\leq i<j\leq k$
and some $b_i\in B_i-B_j$ and $b_j\in B_j-B_i$, we obtain $\mbA$ from
$\mbB$ by replacing $B_i$ by $(B_i-b_i)\cup b_j$ and replacing $B_j$
by $(B_j-b_j)\cup b_i$.

\begin{conj}\label{conj:white2}
  For any matroid $M$ and multiset $S$ of size $k\,r(M)$ with elements
  in $E(M)$ and with $k\geq 2$, the graph $G'_M(S)$ is connected.
\end{conj}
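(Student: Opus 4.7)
The plan is to reduce Conjecture~\ref{conj:white2} to Conjecture~\ref{conj:white} plus a \emph{reordering lemma}, and to address each piece separately. Let $\pi\colon G'_M(S)\to G_M(S)$ be the natural projection that forgets the ordering of a $k$-tuple. It is surjective on vertices, and every edge of $G'_M(S)$ is sent by $\pi$ either to an edge of $G_M(S)$ (when the exchange changes the underlying multiset of bases) or to a loop (when it does not; this happens precisely when $B_i$ and $B_j$ differ by a single element and the exchange swaps them). Consequently, if Conjecture~\ref{conj:white} holds and every fiber of $\pi$ is connected as a subgraph of $G'_M(S)$, then $G'_M(S)$ is connected, which is exactly Conjecture~\ref{conj:white2}.

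The first main task is the reordering lemma: two $k$-tuples that project to the same multiset lie in a common component of $G'_M(S)$. Since adjacent transpositions generate the symmetric group, it suffices to interchange two entries $B_i\neq B_j$ of a tuple by a sequence of symmetric exchanges that touches only positions $i$ and $j$. Writing $D=B_i\setminus B_j$ and $D'=B_j\setminus B_i$, I would shuttle the elements of $D$ from position $i$ to position $j$ one at a time: at each step pick $b\in D$ still at position $i$, and invoke symmetric basis-exchange in $M$ to produce a partner $b'$ for which the two new sets at positions $i$ and $j$ are both bases of $M$. After $|D|$ successful shuttles, positions $i$ and $j$ would hold $B_j$ and $B_i$, while no other position is changed.

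The hard part is that the standard symmetric exchange axiom does not let one choose $b'$; one obtains only \emph{some} element of the opposing symmetric difference, which may in turn belong to $D$ rather than to $D'$, and then the shuttle stalls. Pushing the reordering lemma through for an arbitrary matroid therefore appears to require a stronger exchange property (such as a bijective simultaneous exchange of Greene's type), or the use of a third position of the tuple as a temporary buffer into which an obstinate element can be deposited and later recovered. Even granting the reordering lemma, Conjecture~\ref{conj:white} itself remains the substantive obstacle for general $M$; a natural inductive attack is on $k$ together with a symmetric-difference distance between multisets of bases, using the remaining bases as reservoirs to preserve $S$, but the same pairwise-versus-multiway tension that complicates the reordering lemma blocks every direct approach to Conjecture~\ref{conj:white} in full generality, which is why both conjectures have remained open.
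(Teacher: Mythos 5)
Your decomposition is exactly the paper's: project $G'_M(S)$ onto $G_M(S)$, observe that edges lift, and reduce the whole problem to Conjecture~\ref{conj:white} together with connectivity of the fibers, which in turn reduces (via transpositions) to swapping two entries $B_i\neq B_j$ by exchanges confined to positions $i$ and $j$. But you stop precisely at the step that carries the content. Your shuttle argument stalls for the reason you identify --- the symmetric exchange axiom does not let you choose the partner element --- and the remedy is not a stronger exchange axiom or a buffer position. The missing observation is that the reordering step \emph{is} an instance of Conjecture~\ref{conj:faber}: the sets $A_i-A_j$ and $A_j-A_i$ are complementary disjoint bases of the minor $N=M|(A_i\cup A_j)/(A_i\cap A_j)$, and a sequence of symmetric exchanges carrying $(A_i-A_j,\,A_j-A_i,\,\emptyset)$ to $(A_j-A_i,\,A_i-A_j,\,\emptyset)$ in the basis pair graph $G(N)$ lifts verbatim to a path in $G'_M(S)$ that fixes every position other than $i$ and $j$. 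So the correct conditional statement is: if the basis pair graph of every minor of $M$ is connected and $G_M(S)$ is connected, then $G'_M(S)$ is connected.

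Since the statement you are proving is an open conjecture, no argument valid for all matroids is available; the paper's contribution is the conditional reduction above together with unconditional proofs of both hypotheses for sparse paving matroids (Theorems~\ref{thm:faber} and~\ref{thm:white}), whence Conjecture~\ref{conj:white2} holds for that minor-closed class. Your write-up correctly diagnoses why the naive shuttle fails and why Conjecture~\ref{conj:white} is a genuine obstacle, but it neither identifies the fiber-connectivity requirement with Farber's conjecture nor restricts to a class where the two ingredients can actually be established, so it proves nothing beyond the (correct) reduction of Conjecture~\ref{conj:white2} to Conjecture~\ref{conj:white} plus an unproven reordering lemma.
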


We show that the conclusion of Conjecture~\ref{conj:white2} holds for
a matroid $M$ if Conjecture~\ref{conj:white} holds for $M$ and
Conjecture~\ref{conj:faber} holds for all of its minors.  It follows
that Conjecture~\ref{conj:white2} holds for all sparse paving
matroids.

The fourth conjecture was made by Y.~Kajitani, S.~Ueno, and
H.~Miyano~\cite{orderingKUM}.  A matroid $M$ is \emph{cyclically
  orderable} if there is a cyclic permutation $(a_1,a_2,\ldots,a_n)$
of $E(M)$ in which each set of $r(M)$ cyclically-consecutive elements
is a basis of $M$.

\begin{conj}\label{conj:co}
  A matroid $M$ is cyclically orderable if and only if, for all
  nonempty subsets $A$ of $E(M)$, 
  \begin{equation}\label{eq:unifd}
    r(M)\, |A| \leq r(A)\,|E(M)|.
  \end{equation}
\end{conj}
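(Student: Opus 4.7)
My plan is to treat the two directions separately. The forward direction is a routine averaging argument that holds for all matroids; the converse is the genuinely hard direction, and I will outline a strategy built on Edmonds' matroid partition theorem and an iterated exchange argument, acknowledging in advance that this is the step where real progress would be needed to settle a conjecture that has been open for three decades. For the forward direction, suppose $(a_1,\ldots,a_n)$ is a cyclic ordering witnessing cyclic orderability, and let $B_i=\{a_i,a_{i+1},\ldots,a_{i+r-1}\}$ (indices modulo $n$, with $r=r(M)$) denote the $i$-th window. Each element of $E(M)$ lies in exactly $r$ of the $n$ windows, so $\sum_{i=1}^{n}|A\cap B_i|=r\,|A|$. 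Since $B_i$ is a basis, $A\cap B_i$ is independent and therefore $|A\cap B_i|\le r(A)$. Summing over $i$ yields (\ref{eq:unifd}).

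For the converse, assume (\ref{eq:unifd}) for every nonempty $A$. First I would dispose of the case $r\mid n$: here the hypothesis $|A|\le(n/r)\,r(A)$ is exactly the condition of Edmonds' matroid partition theorem, which therefore partitions $E(M)$ into $k=n/r$ bases $B^{(1)},\ldots,B^{(k)}$. The remaining task is to order these bases cyclically as $B^{(\pi(1))},\ldots,B^{(\pi(k))}$ and to order the elements within each so that every window of $r$ cyclically-consecutive elements, including the overlap windows that straddle two consecutive blocks, is a basis. For general $n$, I would reduce to the divisible case by taking parallel extensions: replacing each element by $c$ copies scales cardinalities by $c$ without changing the rank of any projected set, and for $c$ with $r\mid cn$ the resulting matroid still satisfies (\ref{eq:unifd}). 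A cyclic order on the extension would then be thinned to one on $M$, although verifying that thinning preserves the basis condition on every length-$r$ window is itself a subtlety that would need separate argument.

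The main obstacle is the construction of the cyclic arrangement with the straddling property. The density condition supplies a single family of global inequalities, while the desired ordering is a tightly constrained cycle of compatible basis exchanges. My plan would be to define a potential function on candidate orderings, for instance the number of length-$r$ windows that fail to be a basis, and, at each non-optimal state, to seek a local swap of two elements across adjacent blocks that strictly decreases this potential; producing such a swap from (\ref{eq:unifd}) is precisely the known sticking point. It seems to require either a new exchange lemma not yet in the literature, or a clever application of matroid union yielding two disjoint common bases with prescribed intersection with a prior partition. I expect this is where the argument would stall, and the eventual proof of the converse to hinge on a genuinely new structural insight into uniformly dense matroids rather than on a direct combination of existing tools.
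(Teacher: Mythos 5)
Your proposal is candid that it does not prove the statement, and indeed it does not: the converse direction is left entirely open. To be fair, the statement is posed in the paper as a conjecture and the paper does not prove it in general either, so the gap is in the problem, not merely in your write-up; but as a proof attempt the proposal stalls exactly where you say it will. Your forward direction is correct and complete: each element lies in exactly $r$ of the $n$ windows, each window meets $A$ in an independent subset of $A$, and summing gives (\ref{eq:unifd}); this is precisely the ``counting argument'' the paper alludes to immediately after Conjecture \ref{conj:co}. For the converse you supply only a program. The appeal to Edmonds' partition theorem when $r\mid n$ is sound as far as it goes (the hypothesis $|A|\le (n/r)\,r(A)$ does yield a partition of $E(M)$ into $n/r$ bases), but partitioning into bases is much weaker than arranging them in a cycle so that every straddling window is also a basis, and no mechanism for the latter is given. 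The parallel-extension reduction is also not innocuous: a cyclic ordering of the $c$-fold extension may scatter the copies of a single element arbitrarily, so ``thinning'' does not obviously return a cyclic ordering of $M$, let alone one all of whose $r$-windows are bases. Since no local-swap lemma is produced that strictly decreases your potential function, the hard direction remains unproved.

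For comparison, what the paper actually establishes is Theorem \ref{thm:co}: the conjecture holds for sparse paving matroids. That proof is a concrete instance of the repair strategy you envision, made to work by two facts special to the class. First, Lemma \ref{lem:av} bounds the number of circuit-hyperplanes by $\binom{n}{r}/(n-r+1)$ via a double count of pairs $(H,B)$ with $|H\triangle B|=2$, and deduces that the \emph{average} number of bad $r$-windows over all cycles is less than two (when $2r\le n$, which one may assume by duality), so some cycle has at most one bad window. Second, Lemma \ref{lem:chb} --- any $r$-set at symmetric difference two from a circuit-hyperplane is a basis --- is exactly the kind of ``new exchange lemma'' whose absence you correctly identify as the sticking point in general; with it, a short explicit sequence of transpositions of the first few elements repairs the single bad window. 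So your diagnosis of where the difficulty lies is accurate and your easy direction is right, but the proposal does not constitute a proof of the statement.
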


A counting argument shows that inequality (\ref{eq:unifd}) holds if
$M$ is cyclically orderable.  Recent progress on this conjecture was
made by J.~van den Heuvel and S.~Thomass\'e~\cite{cyclicHT}

The fifth conjecture was first raised as a problem by
H.~Gabow~\cite{gabow} and has been pursued
in~\cite{bc,cyclicHT,orderingKUM}.  To match our work below, we state
the conjecture in the case of disjoint bases; it is easy to show that
this implies its counterpart for arbitrary bases.

\begin{conj}\label{conj:cyclic}
  If $B_1$ and $B_2$ are disjoint bases of a rank-$r$ matroid $M$,
  then some cycle $(b_1,b_2,\ldots,b_r,b_{r+1},\ldots,b_{2r})$ has
  $B_1=\{b_1,b_2,\ldots,b_r\}$ and $B_2 =
  \{b_{r+1},b_{r+2},\ldots,b_{2r}\}$, and has each set of $r$
  cyclically-consecutive elements being a basis of $M$.
\end{conj}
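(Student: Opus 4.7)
I plan to argue by induction on the rank $r$. The base case $r=1$ is immediate: two disjoint bases are singletons $\{b_1\}$ and $\{b_2\}$, and the cycle $(b_1,b_2)$ has each of its two length-$1$ windows equal to one of these bases.

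For the inductive step with $r\geq 2$, the strategy is to peel off one element from each of $B_1$ and $B_2$, apply the hypothesis to a rank-$(r-1)$ minor on the remaining $2r-2$ elements, and then reinsert the peeled elements at the seam between the two halves. By symmetric basis-exchange applied to any $b\in B_1$, choose $b'\in B_2$ so that both $(B_1-b)\cup b'$ and $(B_2-b')\cup b$ are bases of $M$. Set $N=M/b\setminus b'$, a rank-$(r-1)$ matroid in which $B_1-b$ and $B_2-b'$ are disjoint bases (the second because $(B_2-b')\cup b$ is a basis of $M$). By induction, $N$ has a cyclic ordering $(a_1,\ldots,a_{r-1},a_r,\ldots,a_{2r-2})$ with $B_1-b=\{a_1,\ldots,a_{r-1}\}$, $B_2-b'=\{a_r,\ldots,a_{2r-2}\}$, and each length-$(r-1)$ cyclically consecutive subset a basis of $N$.

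The candidate cycle for $M$ is $(a_1,\ldots,a_{r-1},b,a_r,\ldots,a_{2r-2},b')$, placing $b$ at the end of the $B_1$-half and $b'$ at the end of the $B_2$-half. Of its $2r$ length-$r$ windows, four are bases by direct construction: $B_1$, $B_2$, $(B_2-b')\cup\{b\}$, and $(B_1-b)\cup\{b'\}$, the last two being the windows centered on the two seams. Each of the further $r-1$ windows that contains $b$ but not $b'$ has the form $X\cup\{b\}$ with $X$ a length-$(r-1)$ window of the inductive cycle, hence a basis of $N$, so $X\cup\{b\}$ is a basis of $M$.

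The main obstacle lies in the remaining $r-1$ windows, those that contain $b'$ but not $b$. Each is of the form $Y\cup\{b'\}$ with $Y$ a length-$(r-1)$ window of the inductive cycle, so $Y$ is a basis of $N$; what this yields is that $Y\cup\{b\}$ is a basis of $M$, not that $Y\cup\{b'\}$ is. The plan therefore succeeds only if the cycle produced inductively in $N$ has the stronger property that adjoining $b'$ in place of $b$ to each of its windows also yields a basis of $M$. Securing this seems to require either a delicately coordinated initial choice of the seam pair $b,b'$, or a strengthening of the inductive hypothesis to carry additional data controlling both reinsertions at once. This coordination is, I expect, precisely the crux of the conjecture; overcoming it appears to demand either an exchange axiom stronger than the symmetric one or an argument tailored to the particular class of matroids under consideration.
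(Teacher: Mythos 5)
Your proposal is not a complete proof, and the gap you identify in your final paragraph is genuine and essential rather than a technicality. Bear in mind that the statement is an open conjecture for general matroids; the paper proves it only for sparse paving matroids. The failure is exactly where you locate it: for a window of the form $Y\cup\{b'\}$ (containing $b'$ but not $b$, other than $B_2$ and $(B_1-b)\cup b'$), knowing that $Y$ is a basis of $N=M/b\setminus b'$ yields only that $Y\cup\{b\}$ is a basis of $M$. The minor $N$ by construction discards all information about how $b'$ sits relative to the flats of $M$ spanned by subsets of $E(M)-\{b,b'\}$, so no refinement of the ordering produced inside $N$ can certify that $Y\cup\{b'\}$ is a basis of $M$; what you would need is a single cycle on $E(M)-\{b,b'\}$ that is simultaneously good for the two different minors $M/b\setminus b'$ and $M/b'\setminus b$ on the appropriate windows, and one inductive hypothesis applied to one rank-$(r-1)$ matroid cannot deliver that. (A cosmetic point: there are $r-2$, not $r-1$, windows containing $b$ but not $b'$ beyond $B_1$ and $(B_2-b')\cup b$, and likewise for $b'$.) When the seam pair $b,b'$ can be coordinated globally in the way you describe, you are essentially in the strongly base-orderable situation, which, as the paper notes via de~Mier's observation, is already settled; the content of the conjecture lies precisely in the matroids where no such coordinated exchange exists.

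The paper's proof for sparse paving matroids takes an entirely different, non-inductive route: start from the concatenated cycle $(b_1,\ldots,b_r,c_1,\ldots,c_r)$, arrange by the standard bijective-exchange ordering that every window of the form $\{b_i,\ldots,b_r,c_1,\ldots,c_{i-1}\}$ is a basis, call any remaining non-basis window a ``problem,'' and show that each problem can be eliminated by transposing two or three nearby entries of the cycle without creating new problems. The engine is Lemma~\ref{lem:chb}: in a sparse paving matroid every $r$-set at symmetric difference $2$ from a circuit-hyperplane is a basis, so non-bases are isolated and a small local perturbation always escapes them. That local rigidity is exactly the structural input your inductive scheme lacks; any salvage of your approach would have to supply a comparable tool for the class of matroids under consideration, closed under the minors you take.
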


It is not hard to show that if this conjecture holds for $M$, then it
holds for $M^*$ and for all minors of $M$.  H.~Gabow~\cite{gabow}
noted that the conjecture holds for transversal matroids.  It has also
been proven for graphic matroids~\cite{bc,orderingKUM}.
A.~de~Mier~\cite{anna} observed that this conjecture holds for
strongly base-orderable matroids.  Recall that a matroid is
\emph{strongly base-orderable} if for each pair of bases $B_1$ and
$B_2$ of $M$, there is a bijection $\phi:B_1\rightarrow B_2$ such that
for every subset $X\subseteq B_1$, both $(B_1-X)\cup \phi(X)$ and
$(B_2-\phi(X))\cup X$ are bases.  If $M$ is strongly base-orderable,
then listing the elements of $B_1$ in any order followed by their
images under $\phi$, in the corresponding order, gives the required
cycle.  The class of strongly base-orderable matroids is both
minor-closed and dual-closed, and it strictly contains the class of
all gammoids (which include transversal matroids).

In Section~\ref{sec:proofs}, we prove
Conjectures~\ref{conj:faber}--\ref{conj:cyclic} for sparse paving
matroids.  Section~\ref{sec:ncf} treats another aspect of these
matroids as we study the greatest number of cyclic flats in any
matroid on $n$ elements.  We give an upper bound on this number and
note that a lower bound follows from work of R.~Graham and
N.~Sloane~\cite{gs} which, in a different setting, essentially
constructs sparse paving matroids.  The gap between these bounds is
similar to that in inequality (\ref{eq:bound}).  We provide the
relevant background on cyclic flats in that section.

Our notation follows J.~Oxley~\cite{ox}.  The symmetric difference,
$(X-Y)\cup (Y-X)$, of two sets $X$ and $Y$ is denoted by $X\triangle
Y$.  We let $[n]$ denote the set $\{1,2,\ldots,n\}$.

\section{Proofs of Conjectures~\ref{conj:faber}--\ref{conj:cyclic} in
  the Case of Sparse Paving Matroids}\label{sec:proofs}

We will use the lemmas below.  The first follows easily from the
definition of sparse paving.

\begin{lemma}\label{lem:chb}
  Let $M$ be a sparse paving matroid of rank $r$.  Let $H$ and $B$ be
  two $r$-subsets of $E(M)$ with $|H\triangle B|=2$.  If $H$ is a
  circuit-hyperplane of $M$, then $B$ is a basis.
\end{lemma}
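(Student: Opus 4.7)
The plan is to exploit the two properties of $H$ simultaneously: as a circuit of size $r$, its proper subsets are independent; and as a hyperplane, it is a flat of rank $r-1$, so nothing outside it gets swallowed by the closure of any of its subsets.

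Concretely, I would write $X = H \cap B$, so that $|X|=r-1$, and let $h$ and $b$ be the unique elements with $H = X\cup\{h\}$ and $B = X\cup\{b\}$; in particular $b \notin H$. Then I would argue two things about $X$:
\begin{itemize}
\item Since $H$ is a circuit of size $r$, every proper subset of $H$ is independent; in particular $X$ is independent, so $r(X)=r-1$.
\item Since $H$ is a hyperplane, it is a flat of rank $r-1$, so $\cl(X)\subseteq \cl(H)=H$. Because $b\notin H$, this forces $b\notin \cl(X)$.
\end{itemize}
Combining these, $B = X\cup\{b\}$ is independent of size $r$, hence a basis of $M$.

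I do not expect any real obstacle: once one recognizes that $H$ being a \emph{hyperplane} provides the closure information and $H$ being a \emph{circuit} provides the independence of its proper subsets, the argument is just the observation that $b$ lies outside $\cl(X)$. Notice that, written this way, the proof does not even need $M$ to be sparse paving beyond the fact that circuit-hyperplanes of rank $r$ have size $r$; the sparse-paving hypothesis is really just the natural setting in which the statement is applied, since it guarantees that arbitrary $r$-subsets at Hamming distance $2$ from a circuit-hyperplane are the only $r$-subsets one needs to consider, and the conclusion ``$B$ is a basis'' is what one wants in that dichotomy.
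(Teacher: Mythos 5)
Your proof is correct. The paper gives no explicit argument for this lemma (it is stated as following ``easily from the definition of sparse paving''), and your argument --- $H-h$ is independent because $H$ is a circuit, $\cl(H-h)=H$ because $H$ is a hyperplane of the same rank, and $b\notin H$ forces $B=(H-h)\cup b$ to be independent of size $r$ --- is exactly the intended elementary verification. Your closing observation is also right: the statement holds for any circuit-hyperplane in any rank-$r$ matroid, since such a set automatically has $r$ elements; the sparse-paving hypothesis only matters for how the lemma is used later.
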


Although we will not use it, we note that the following strengthening
of Lemma~\ref{lem:chb} is easy to prove: a matroid $M$ of rank $r$ is
sparse paving if and only if whenever $H$ and $B$ are two $r$-subsets
of $E(M)$ with $|H\triangle B|=2$ and $H$ is not a basis, then $B$ is
a basis.  (We remark that the analogous condition on discrete
polymatroids winds up being too restrictive to be of interest.)

\begin{lemma}\label{lem:swap}
  Let $B$ and $B'$ be distinct bases of a sparse paving matroid $M$.
  For $a\in B-B'$ and $X\subseteq B'-B$, there are at least $|X|-2$
  elements $x\in X$ for which both $(B-a)\cup x$ and $(B' - x)\cup a$
  are bases of $M$.
\end{lemma}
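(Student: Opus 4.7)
The plan is to bound, from above, the number of ``bad'' elements $x \in X$, where $x$ is called bad if at least one of the $r$-subsets $(B-a)\cup x$ or $(B'-x)\cup a$ fails to be a basis of $M$. Since $a \in B - B'$ and $x \in B' - B$, both candidate sets have cardinality $r = r(M)$, so by the definition of sparse paving each of them is either a basis or a circuit-hyperplane. Showing that at most two elements of $X$ are bad will immediately give the desired $|X|-2$ good elements.

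The first key step is to consider the family $\{(B-a)\cup x : x \in X\}$ as a collection of $r$-subsets of $E(M)$. For distinct $x_1,x_2 \in X$, a direct computation gives $\bigl((B-a)\cup x_1\bigr) \triangle \bigl((B-a)\cup x_2\bigr) = \{x_1,x_2\}$, which has size $2$. By Lemma~\ref{lem:chb}, if one of these two $r$-subsets were a circuit-hyperplane then the other would be forced to be a basis; hence at most one member of this family can be a non-basis, so at most one $x \in X$ makes $(B-a)\cup x$ fail to be a basis.

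The second step is the symmetric argument applied to the family $\bigl\{(B'-x)\cup a : x \in X\bigr\}$. For distinct $x_1,x_2 \in X$, the same short calculation yields $\bigl((B'-x_1)\cup a\bigr) \triangle \bigl((B'-x_2)\cup a\bigr) = \{x_1,x_2\}$, again of size $2$, and Lemma~\ref{lem:chb} once more allows at most one non-basis in this family. Combining the two bounds gives at most two bad $x \in X$, which completes the argument.

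There is essentially no serious obstacle here: the proof reduces to the observation that both natural one-parameter families of $r$-subsets obtained by varying $x \in X$ consist of sets pairwise separated by symmetric differences of size exactly $2$, so that Lemma~\ref{lem:chb} forces each family to contain at most one circuit-hyperplane. The only thing to keep in mind is the mild bookkeeping check that $a \notin B'-x$ and $x \notin B-a$, which makes each $(B-a)\cup x$ and $(B'-x)\cup a$ a genuine $r$-subset rather than a set of smaller size.
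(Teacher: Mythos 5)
Your proposal is correct and follows exactly the paper's argument: Lemma~\ref{lem:chb} forces at most one circuit-hyperplane in each of the two families $\{(B-a)\cup x : x\in X\}$ and $\{(B'-x)\cup a : x\in X\}$, since any two members of either family have symmetric difference of size $2$. You have simply spelled out in more detail the one-sentence proof the paper gives.
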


\begin{proof}
  The lemma follows since, by Lemma~\ref{lem:chb}, at most one set
  $(B-a)\cup x$ with $x\in X$, and at most one set $(B' - x')\cup a$
  with $x'\in X$, is a circuit-hyperplane.
\end{proof}

We now turn to Conjecture ~\ref{conj:faber}.

\begin{thm}\label{thm:faber}
  Conjecture ~\ref{conj:faber} holds for sparse paving matroids.
\end{thm}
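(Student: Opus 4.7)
The plan is to prove connectedness of $G(M)$ by showing that any two vertices $\mathbf{A} = (A_1, A_2, A_3)$ and $\mathbf{B} = (B_1, B_2, B_3)$ are joined by a path of moves, proceeding in two stages: first reach a vertex whose first coordinate is $B_1$, then preserve that coordinate while matching the second.

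For stage one, I would induct on $|A_1 \triangle B_1|$. If positive, pick $b \in B_1 - A_1$. If $b \in A_3$, perform an $A_1$-$A_3$ swap with some $a \in A_1 - B_1$ such that $(A_1 - a) \cup b$ is a basis. By Lemma~\ref{lem:chb}, at most one $a \in A_1$ is bad, so a good $a$ exists when $|A_1 - B_1| \geq 2$; and when $|A_1 - B_1| = 1$ the forced choice gives $(A_1 - a) \cup b = B_1$, automatically a basis. If $b \in A_2$, first try an $A_1$-$A_2$ swap: Lemma~\ref{lem:swap} shows at most two $a \in A_1$ are bad, so the swap succeeds when $|A_1 - B_1| \geq 3$. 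For smaller $|A_1 - B_1|$, provided $|A_3| \geq 1$, a two-step detour works: move $b$ from $A_2$ into $A_3$ via an $A_2$-$A_3$ swap, which is valid for any $c \in A_3$ since the failure of the direct swap pins the unique bad $c$ for $(A_2 - b) \cup c$ to be $a \in A_1$; this reduces to the $b \in A_3$ subcase. For stage two, only $A_2$-$A_3$ swaps preserve $A_1 = B_1$; noting that $B_2 - A_2 \subseteq A_3$, induct on $|A_2 \triangle B_2|$, picking $a \in A_2 - B_2$ and $b \in B_2 - A_2$ with $(A_2 - a) \cup b$ a basis, invoking the same sparse paving count and the same automatic argument in the $|A_2 \triangle B_2| = 2$ base case.

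The main obstacle is the stage-one subcase where $b \in A_2$, $|A_1 - B_1| \leq 2$, and $A_3 = \emptyset$ (so $|E| = 2r$, and neither the direct $A_1$-$A_2$ swap nor the $A_2$-$A_3$ detour is available). Here one must perform a ``lateral'' $A_1$-$A_2$ swap that exchanges a well-placed element of $A_1 \cap B_1$ with one of $A_2 \cap B_2$: such a swap preserves $|A_1 \triangle B_1|$ but perturbs the surrounding configuration enough to undo the circuit-hyperplane obstructions that blocked the direct move. The strong restriction from Lemma~\ref{lem:chb} that no two circuit-hyperplanes differ by only two elements forces the four sets $(A_1 - a_i) \cup b_j$ (and similarly for $(A_2 - b_j) \cup a_i$) to form a $4$-cycle of distance-two pairs, on which at most two diagonal circuit-hyperplanes can occur; this bound limits how many obstructions can simultaneously block the local family of candidate swaps, so a careful case analysis shows such a lateral move always exists and, after at most a few applications, unblocks the main move. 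Working out these cases is the bulk of the argument.
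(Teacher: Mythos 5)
Your two-stage plan (first match the first coordinate, then the second via $A_2$--$A_3$ swaps) is a reasonable and somewhat different organization from the paper's, which instead first matches the \emph{third} coordinate and then reduces to the disjoint-bases case in the minor $M\del B_3$. Your easy cases are fine: for an $A_1$--$A_3$ or $A_2$--$A_3$ swap only one new set must be a basis, and Lemma~\ref{lem:chb} leaves at most one bad choice, while Lemma~\ref{lem:swap} handles the direct $A_1$--$A_2$ swap when $|A_1-B_1|\geq 3$. (One small flaw even here: your claim that the detour through $A_3$ is ``valid for any $c\in A_3$'' because the failed direct swap ``pins the unique bad $c$'' is not justified --- the direct swap may fail only because $(A_1-a)\cup b$ is a circuit-hyperplane, which says nothing about which single element is bad for $(A_2-b)\cup(\cdot)$; so if $|A_3|=1$ the detour can be blocked. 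This is repairable but your stated reason is wrong.)

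The genuine gap is exactly the case you flag as ``the bulk of the argument'' and then do not carry out: $|A_1\triangle B_1|=4$ with $A_3=\emptyset$. Your correct observation that the obstructions form two diagonals of circuit-hyperplanes (two among the $(A_1-a_i)\cup b_j$ and two among their complements) does not by itself make your ``lateral'' swap of an element of $A_1\cap B_1$ with one of $A_2\cap B_2$ work. After such a swap, each of the four new candidate exchange sets differs from each of the four known circuit-hyperplanes in \emph{four} elements, so Lemma~\ref{lem:chb} gives no information about them; the new configuration can in principle be fully blocked again, and you give no termination argument for ``at most a few applications.'' The paper's resolution is a different intermediate move: it exchanges an element $x\in A_1\cap B_1$ with an element of $A_1\triangle B_1$ (one of the elements that must cross sides), producing a vertex at the same distance from the target, and then performs one more exchange to land at a vertex adjacent to the target; crucially, every set arising in these two steps differs from one of the four known circuit-hyperplanes in exactly two elements, so Lemma~\ref{lem:chb} certifies all of them as bases with no further case analysis. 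That idea --- choosing the intermediate vertex so that all verifications fall out of the four already-identified circuit-hyperplanes --- is the heart of the proof and is absent from your sketch, so the proposal as written does not establish the theorem.
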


\begin{proof}
  We first prove the result when $E(M)$ is the disjoint union of two
  bases; we will then reduce the general case to this one.  In this
  case, vertices have the form $(B_1,B_2,\emptyset)$, which we
  simplify to $(B_1,B_2)$ in the next two paragraphs.  We must show
  that for each pair $(A_1,A_2)$ and $(B_1,B_2)$ of vertices in $G(M)$
  with $|A_1\triangle B_1|\geq 4$, there is a path between them.  For
  this, it suffices to show that there is a path from $(B_1,B_2)$ to a
  vertex $(B'_1,B'_2)$ with $|A_1\triangle B'_1|<|A_1\triangle B_1|$.

  If $|B_1 - A_1|\geq 3$, then fix $x\in B_1-A_1$ and set $X=A_1-B_1$.
  We have $|X|\geq 3$ and $X\subseteq B_2$, so, by Lemma
  \ref{lem:swap}, the pair $\bigl((B_1-x)\cup y,(B_2-y)\cup x\bigr)$
  is a vertex of $G(M)$ for some $y\in X$.  Also, $|A_1\triangle
  \bigl((B_1-x)\cup y\bigr)|<|A_1\triangle B_1|$, as needed.

  In the remaining case, $|B_1 - A_1|=2$, let $B_1-A_1 = \{b_1,b_2\}$
  and $A_1-B_1 = \{a_1,a_2\}$.  Thus, $a_1,a_2\in B_2$.  If any of the
  following four symmetric exchanges yields only bases, it would
  provide the desired vertex $(B'_1,B'_2)$ adjacent to $(B_1,B_2)$:
  \begin{enumerate}
  \item[(a)] $(B_1-b_1)\cup a_1$ and $(B_2 - a_1)\cup b_1$,
  \item[(b)] $(B_1-b_1)\cup a_2$ and $(B_2 - a_2)\cup b_1$,
  \item[(c)] $(B_1-b_2)\cup a_1$ and $(B_2 - a_1)\cup b_2$,
  \item[(d)] $(B_1-b_2)\cup a_2$ and $(B_2 - a_2)\cup b_2$.
  \end{enumerate}
  Thus, we may assume that each pair contains a circuit-hyperplane.
  By symmetry, we may assume that $(B_1-b_1)\cup a_1$ is a
  circuit-hyperplane; then $(B_1-b_1)\cup a_2$ and $(B_1-b_2)\cup a_1$
  are bases by Lemma~\ref{lem:chb}, so $(B_2 - a_2)\cup b_1$ and $(B_2
  - a_1)\cup b_2$ are circuit-hyperplanes; thus, $(B_2 - a_2)\cup b_2$
  is a basis by Lemma~\ref{lem:chb}, so $(B_1-b_2)\cup a_2$ is a
  circuit-hyperplane.  For all four sets just identified to be
  circuit-hyperplanes, we must have $r(M)\geq 3$, so there is an
  element $x$ in $B_1\cap A_1$.  By comparison with the four known
  circuit-hyperplanes, it follows that each set in the following
  symmetric exchanges is a basis:
  \begin{enumerate}
  \item[(e)] $(B_1-x)\cup a_1$ and $(B_2 - a_1)\cup x$,
  \item[(f)] $B'_1=(B_1-\{x,b_2\})\cup \{a_1,a_2\}$ and $B'_2=(B_2 -
    \{a_1,a_2\})\cup \{x,b_2\}$.
  \end{enumerate}
  Since $(B'_1,B'_2)$ is adjacent to $(A_1,A_2)$, the needed path from
  $(B_1,B_2)$ to $(A_1,A_2)$ exists.

  In the general case, for two vertices $\mbA = (A_1,A_2,A_3)$ and
  $(B_1,B_2,B_3)$ of $G(M)$, we will show that there is a path in
  $G(M)$ from $\mbA$ to a vertex of the form $(C_1,C_2,B_3)$; the
  theorem then follows by applying the case just treated to the basis
  pair graph of $M\del B_3$.  (Recall that the third set in these
  triples need not be a basis.)

  Assume $|A_3\triangle B_3|\geq 4$.  By symmetry, we may assume
  $|A_1\cap B_3|\geq 1$; fix some $a_1\in A_1\cap B_3$.  Since $M$ is
  sparse paving, the hyperplane $\cl(A_1-a_1)$ contains at most one
  element in $A_3-B_3$, so $A'_1=(A_1-a_1)\cup a_3$ is a basis for
  some $a_3\in A_3-B_3$.  The vertex $(A'_1,A_2,A'_3)$, where $A'_3 =
  (A_3-a_3)\cup a_1$, is adjacent to $\mbA$ and has $|A'_3\triangle
  B_3| <|A_3\triangle B_3|$.

  By iterating the argument above, it now suffices to treat the case
  $|A_3\triangle B_3|=2$.  Let $A_3 - B_3 = \{a_3\}$ and $B_3 - A_3 =
  \{b_3\}$.  We may assume $b_3\in A_1$.  If $(A_1-b_3)\cup a_3$ is a
  basis of $M$, then the claim holds, so assume instead that this set
  is a circuit-hyperplane.  By symmetrically exchanging any element
  $a_1\in A_1-b_3$ with some element $a_2\in A_2$, we get a vertex
  $((A_1 - a_1)\cup a_2,(A_2 - a_2)\cup a_1,A_3)$ that is adjacent to
  $\mbA$ and in which, by Lemma~\ref{lem:chb}, we can exchange $b_3$
  in $(A_1 - a_1)\cup a_2$ with $a_3$ in $A_3$, which completes the
  proof of the claim and so of the theorem.
\end{proof}

We now turn to Conjecture~\ref{conj:white}.

\begin{thm}\label{thm:white}
  Conjecture ~\ref{conj:white} holds for sparse paving matroids.
\end{thm}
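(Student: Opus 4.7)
My plan is to proceed by induction on $k$, with Theorem~\ref{thm:faber} supplying the base case $k=2$. For $k=2$, every vertex $\{B_1,B_2\}$ of $G_M(S)$ must satisfy $B_1\cap B_2=I$, where $I=\{e\in E(M):m_S(e)=2\}$, since the multiplicities in the multiset $\{B_1,B_2\}$ must match those in $S$. Writing $B_i=I\cup C_i$ and setting $T=\mathrm{supp}(S)-I$, each vertex is determined by a partition $\{C_1,C_2\}$ of $T$ into two bases of the minor $M'=(M/I)\ssp|\ssp T$, and edges of $G_M(S)$ correspond to symmetric exchanges within such partitions. Hence $G_M(S)$ is isomorphic to the basis pair graph of $M'$ (with the third coordinate empty); since $M'$ inherits the sparse paving property from $M$, Theorem~\ref{thm:faber} yields connectivity.

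For $k\geq 3$, given $\mcA=\{A_1,\ldots,A_k\}$ and $\mcB=\{B_1,\ldots,B_k\}$ with common multiset union $S$, it suffices to transform $\mcA$, via edges of $G_M(S)$, into some multiset $\mcA'$ containing $B_1$. Then $\mcA'-\{B_1\}$ and $\mcB-\{B_1\}$ have common multiset union $S-B_1$ and size $k-1$, and the edges of $G_M(S)$ leaving the designated $B_1$ fixed are precisely the edges of $G_M(S-B_1)$, so the inductive hypothesis completes the path. The main task is therefore a \emph{placement lemma}: for sparse paving $M$, a multiset $\mcA$ of $k$ bases with multiset union $S$, and a basis $B$ such that $S-B$ decomposes as a multiset union of $k-1$ bases, some sequence of symmetric exchanges in $\mcA$ yields a multiset containing $B$.

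To prove the placement lemma, I would use the potential $\phi(\mcA)=\min_i|A_i\triangle B|$. If $\phi(\mcA)>0$, let $A_1$ achieve the minimum and pick $a\in A_1-B$. Since $a\notin B$, the multiplicity $m_S(a)$ is at most $k-1$, so some $A_j$ with $j\neq 1$ omits $a$. A symmetric exchange between $A_1$ and $A_j$ swapping $a$ for an element $x\in(A_j-A_1)\cap B$ drops $\phi$ by two, and by Lemma~\ref{lem:swap} at most two elements of $A_j-A_1$ fail to give a valid basis pair; so whenever $|(A_j-A_1)\cap B|\geq 3$ for some admissible $j$ we make progress. The hard part will be the residual regime where $|(A_j-A_1)\cap B|\leq 2$ for every admissible $j$ and the one or two candidate swaps there all produce a circuit-hyperplane. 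I expect to handle this in the spirit of the $|B_1-A_1|=2$ subcase in the proof of Theorem~\ref{thm:faber}: invoke Lemma~\ref{lem:chb} repeatedly to cascade information about which $r$-subsets are circuit-hyperplanes, conclude that $r(M)$ is large enough to house a third element $x\in A_1\cap B$, and use that element to engineer a two-step detour --- an auxiliary symmetric exchange inside $\mcA$ followed by the originally blocked one --- that lies in $G_M(S)$ and strictly decreases $\phi$. Verifying that this detour is always available, and checking the multiplicity and basis constraints along the way, should be the most delicate part.
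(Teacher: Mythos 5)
Your outline matches the paper's strategy at the top level: induct on $k$, use symmetric exchanges to make one basis of the target multiset appear, then pass to $G_M(S-B_1)$, and dispatch the ``room to move'' cases with Lemma~\ref{lem:swap}. (Your $k=2$ base case via the basis pair graph of $(M/I)\ssp|\ssp T$ is a correct, if unnecessary, alternative to the paper's trivial $k=1$ base case.) The problem is that your placement lemma --- which is the entire content of the theorem --- is left unproven exactly where it is hard, and the deferred part is not a routine variant of the $|B_1-A_1|=2$ subcase of Theorem~\ref{thm:faber}. Two concrete issues. First, your residual regime is described as ``the one or two candidate swaps there all produce a circuit-hyperplane,'' which presumes a candidate swap exists; but when $A_1-B=\{a\}$ and $B-A_1=\{b\}$, your move needs some $A_j$ containing $b$ but not $a$, and nothing in your argument rules out the configuration in which every basis of $\mcA$ containing $b$ also contains $a$. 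No local two-step detour via Lemma~\ref{lem:chb} escapes this, because the obstruction is not one stray circuit-hyperplane but a simultaneous structural property of all $k$ bases. The paper's treatment of precisely this case ($i=1$) occupies roughly half its proof: it derives four constraints (its statements (A)--(D)) on every basis in the multiset and then obtains a contradiction by a global count of the multiplicities of the elements of $B_2-a_1$ across the two multisets. Nothing of that kind is in your sketch, and it is not plausible that it falls out of a Lemma~\ref{lem:chb} cascade alone.

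Second, the paper buys itself room by a symmetry you have discarded: it assumes the elements of $A_1-B_1$ have total multiplicity in $S$ at least that of $B_1-A_1$ (swapping the roles of $\mcA$ and $\mcB$ if necessary), which guarantees a basis $B_2$ with more elements from $A_1-B_1$ than from $B_1-A_1$ and is invoked again to establish its statement (C). By fixing the target $B=B_1$ in advance and permitting yourself to modify only $\mcA$, you lose the option of choosing which side to move based on multiplicities, so even your favorable cases would need to be rechecked under weaker hypotheses. In short: the skeleton is right and consistent with the paper, but the load-bearing case analysis is missing, so this is a genuine gap rather than a complete alternative proof.
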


\begin{proof}
  Let $M$ be a sparse paving matroid.  We prove that $G_M(S)$ is
  connected by induction on $k$, where $|S|=k\, r(M)$.  The base case
  $k=1$ is trivial: $G_M(S)$ is connected since it has at most one
  vertex.  For $k\geq 2$, we claim that for any two vertices
  \begin{equation*}
    \mcA=\{A_1,A_2,\ldots,A_k\} \quad \text{ and }  \quad
    \mcB=\{B_1,B_2,\ldots,B_k\}
  \end{equation*}
  of $G_M(S)$, there are (possibly trivial) paths from $\mcA$ to some
  vertex $\{A'_1,A'_2,\ldots,A'_k\}$ and from $\mcB$ to some vertex
  $\{B'_1,B'_2,\ldots,B'_k\}$ with $A'_1=B'_1$.  Proving this claim
  gives the result by induction since having a path from $\mcA$ to
  $\mcB$ in $G_M(S)$ follows from having a path from
  $\{A'_2,A'_3,\ldots,A'_k\}$ to $\{B'_2,B'_3,\ldots,B'_k\}$ in
  $G_M(S-A'_1)$, where $S-A'_1$ is the multiset difference.  List the
  sets in $\mcA$ and $\mcB$ so that $|A_1\triangle B_1|\leq
  |A_h\triangle B_j|$ for all $h,j\in[k]$.  Set $|A_1\triangle
  B_1|=2i$.  To prove the claim, it suffices to show that if $i>0$,
  then
  \begin{itemize}
  \item[(*)] there is a path from $\mcB$ to a vertex
    $\{B''_1,B''_2,\ldots,B''_k\}$ with $|A_1\triangle B''_1|<2i$.
  \end{itemize}

  Set $A_1-B_1=\{a_1,a_2,\ldots,a_i\}$ and
  $B_1-A_1=\{b_1,b_2,\ldots,b_i\}$.  By symmetry, we may assume that
  the sum of the multiplicities of the elements in $A_1-B_1$ in $S$ is
  at least as large as the corresponding sum for $B_1-A_1$.  It
  follows that some basis in $\mcB$, say $B_2$, has more elements from
  $A_1-B_1$ than from $B_1-A_1$.  We consider several options for
  $B_2$.

  For the case $i\geq 3$, first assume $B_2\cap( B_1-A_1) =\emptyset$.
  We may assume $a_1\in B_2$.  Apply Lemma \ref{lem:swap} with $x =
  a_1$ and $X = B_1-A_1$ (so $|X|\geq 3$): for some $b_h\in B_1-A_1$,
  both $(B_1-b_h)\cup a_1$ and $(B_2-a_1)\cup b_h$ are bases, so
  statement (*) follows.

  Now, along with $i\geq 3$, assume $|B_2\cap (A_1-B_1)|\geq 3$.  Let
  $X = B_2\cap (A_1-B_1)$.  Since $B_2$ has more elements from
  $A_1-B_1$ than from $B_1-A_1$, some element in $B_1-A_1$, say $b_1$,
  is not in $B_2$.  Apply Lemma \ref{lem:swap} to $B_1$ and $B_2$ with
  $x=b_1$ and $X$: for some $a_h\in X$, both $(B_1-b_1)\cup a_h$ and
  $(B_2-a_h)\cup b_1$ are bases. Statement (*) now follows.

  We now address the case with $B_2\cap(A_1\triangle B_1)=
  \{a_1,a_2,b_3\}$, thereby completing the argument for $i\geq 3$.  If
  we can symmetrically exchange one of $a_1,a_2$ in $B_2$ for one of
  $b_1,b_2$ in $B_1$ to get bases, then statement (*) holds.  Assume
  that none of these four symmetric exchanges yields only bases.  An
  argument like that in the third paragraph of the proof of Theorem
  \ref{thm:faber} shows that we may assume that
  \begin{equation*}
    (B_1-b_1)\cup a_1, \quad (B_2 - a_2)\cup b_1, \quad 
    (B_2 - a_1)\cup b_2, \quad \text{and} \quad (B_1-b_2)\cup a_2
  \end{equation*}
  are circuit-hyperplanes.  In order to have $|A_1\triangle B_1|\leq
  |A_1\triangle B_2|$ given that $B_2\cap(A_1\triangle B_1)$ is
  $\{a_1,a_2,b_3\}$, there must be an element, say $y$, in $B_2-
  (A_1\cup B_1)$.  From Lemma~\ref{lem:chb} and the
  circuit-hyperplanes above, we have that $(B_1-b_1)\cup y$ and $(B_2
  - y)\cup b_1$ are bases, as are $(B_1-\{b_1,b_2\})\cup \{y,a_1\}$
  and $(B_2 - \{y,a_1\})\cup \{b_1,b_2\}$.  Statement (*) now follows,
  which completes the argument for $i\geq 3$.

  Now assume $i=2$.  By symmetry, there are two cases: $B_2 \cap
  \{b_1,b_2\}$ is either $\emptyset$ or $\{b_1\}$.  First assume
  $B_2\cap \{b_1,b_2\} = \emptyset$.  We may assume $a_1\in B_2$.  If
  $a_1$ in $B_2$ can be symmetrically exchanged with either $b_1$ or
  $b_2$ in $B_1$ to yield two bases, then statement (*) holds, so
  assume this fails.  By symmetry, $H_1=(B_1-b_1)\cup a_1$ and
  $H_2=(B_2 - a_1)\cup b_2$ can be assumed to be circuit-hyperplanes.
  Since $|A_1\triangle B_1|\leq |A_1\triangle B_2|$, there are least
  two elements, say $z_2$ and $z_3$, in $B_2-A_1$.  By
  Lemma~\ref{lem:chb}, either $(B_2 - z_2)\cup b_1$ or $(B_2 -
  z_3)\cup b_1$ is a basis; assume the former is.  Comparison with
  $H_1$ shows that $(B_1-b_1)\cup z_2 $ and $(B_1-\{b_1,b_2\})\cup
  \{z_2,a_1\}$ are bases; similarly, $(B_2 - \{z_2,a_1\})\cup
  \{b_1,b_2\}$ is a basis by comparison with $H_2$.  Statement (*) now
  follows.

  We now address the case with $B_2\cap \{b_1,b_2\}=\{b_1\}$, thus
  completing the argument for $i=2$.  Note that $B_2$ must also
  contain $a_1$ and $a_2$.  Statement (*) holds if $b_2$ in $B_1$ can
  be symmetrically exchanged with either $a_1$ or $a_2$ in $B_2$ to
  yield two bases.  If neither exchange yields only bases, then, by
  symmetry, we may assume that $H_1=(B_1-b_2)\cup a_1$ and $H_2=(B_2 -
  a_2)\cup b_2$ are circuit-hyperplanes.  At least two elements in
  $A_1\cap B_1$, say $x_3$ and $x_4$, are not in $B_2$ since
  $|A_1\triangle B_1|\leq |A_1\triangle B_2|$.  At least one of $(B_2
  - a_1)\cup x_3$ and $(B_2 - a_1)\cup x_4$ is a basis by
  Lemma~\ref{lem:chb}; assume the first is.  Now $(B_1-x_3)\cup a_1$
  is a basis by comparison with $H_1$.  The sets
  $(B_1-\{x_3,b_2\})\cup \{a_1,a_2\}$ and $(B_2 - \{a_1,a_2\})\cup
  \{x_3,b_2\}$ are also bases by comparison with $H_1$ and $H_2$,
  respectively.  It follows that statement (*) holds.  This completes
  the argument for $i=2$.

  Finally, assume $i=1$, so $A_1-B_1=\{a_1\}$ and $B_1-A_1=\{b_1\}$.
  Thus, $B_2$ contains $a_1$ and not $b_1$.  Let $X = B_2-a_1$.  If
  $X\cup b_1$ is a basis (as it must be if $k$ is $2$), then
  exchanging $a_1$ and $b_1$ in $B_2$ and $B_1$ shows that statement
  (*) holds.  Thus, assume $k\geq 3$ and
  \begin{itemize}
  \item[(A)] $X\cup b_1$ is a circuit-hyperplane.
  \end{itemize}
  If $3\leq h\leq k$ and $b_1\not\in B_h$, and if there is an element
  $y\in X-B_h$, then there is a $z\in B_h-B_2$ for which both $(B_h
  -z)\cup y$ and $(B_2 -y)\cup z$ are bases; from Lemma~\ref{lem:chb}
  and statement (A), it follows that we can symmetrically exchange
  $a_1$ in $(B_2 -y)\cup z$ with $b_1$ in $B_1$ to get two bases,
  which yields statement (*).  Thus, we may assume
  \begin{itemize}
  \item[(B)] each basis $B_h$ contains either $b_1$ or all of $X$.
  \end{itemize}
  If $B_h\cap\{a_1,b_1\}=\{b_1\}$ for some $h$ with $3\leq h\leq k$,
  then the assumption about the multiplicities of $a_1$ and $b_1$
  implies that $B_{h'}\cap\{a_1,b_1\}=\{a_1\}$ for some $h'$ with
  $3\leq h'\leq k$.  Symmetrically exchange $a_1$ in $B_{h'}-B_h$ for
  some $z\in B_h-B_{h'}$ to get bases; since $B_{h'}-a_1$ is $X$ by
  statement (B), statement (A) gives $z\ne b_1$.  Thus, we may assume
  \begin{itemize}
  \item[(C)] for $3\leq h\leq k$, if $b_1\in B_h$, then $a_1\in B_h$.
  \end{itemize}
  Assume $3\leq h\leq k$ and $a_1,b_1\in B_h$.  If $|B_2\triangle
  B_h|\geq 4$, then for $x\in (B_h-b_1)-B_2$, we can symmetrically
  exchange $x\in B_h$ with some $y\in B_2$ (which cannot be $a_1$) to
  yield two bases; with statement (A), this allows us to exchange
  $b_1$ in $B_1$ with $a_1$ in $(B_2-y)\cup x$ to yield statement (*).
  Thus, we may assume
  \begin{itemize}
  \item[(D)] if $a_1, b_1 \in B_h$, then $|B_2\triangle B_h|=2$.
  \end{itemize}
  The proof is completed by showing that statements (A)--(D) yield a
  contradiction.  Consider the multisets $\mathscr{A} =
  \{\{a_1\},A_2,A_3,\ldots,A_k\}$ and
  $\mathscr{B}=\{\{b_1\},B_2,B_3,\ldots,B_k\}$ of sets.  Their
  multiset unions, $\bigcup_{A\in \mathscr{A}}A$ and $\bigcup_{B\in
    \mathscr{B}}B$, are equal.  Let $b_1$ have multiplicity $t+1$ in
  these unions.  Statements (B)--(D) imply that the sum of the
  multiplicities of the elements in $X$ in the sets in $\mathscr{B}$
  is $|X|(k-t-1)+(|X|-1)t$, that is, $|X|(k-1)-t$.  By statement (A),
  $X\cup b_1$ is not in $\mathscr{A}$, so the sum of the
  multiplicities of the elements in $X$ in the sets in $\mathscr{A}$
  is at most $|X|(k-t-2)+(|X|-1)(t+1)$, that is, $|X|(k-1)-t-1$,
  which, as desired, contradicts the equality $\bigcup_{A\in
    \mathscr{A}}A=\bigcup_{B\in \mathscr{B}}B$.
\end{proof}

We now prove a general connection between Conjectures
\ref{conj:faber}, \ref{conj:white}, and \ref{conj:white2}.

\begin{thm}
  Let $M$ be a matroid for which the basis pair graph of each of its
  minors is connected.  For $k\geq 2$, let $S$ be a multiset of size
  $k\,r(M)$ with elements in $E(M)$.  If $G_M(S)$ is connected, then
  so is $G'_M(S)$.
\end{thm}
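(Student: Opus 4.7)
The plan is to use connectedness of $G_M(S)$ to obtain a path of \emph{multisets} and then lift it to a path of \emph{tuples} in $G'_M(S)$. Given two vertices $\mbA=(A_1,\ldots,A_k)$ and $\mbB=(B_1,\ldots,B_k)$ of $G'_M(S)$, the underlying multisets $\mcA=\{A_1,\ldots,A_k\}$ and $\mcB=\{B_1,\ldots,B_k\}$ are vertices of $G_M(S)$ joined by some path $\mcA=\mcC_0,\mcC_1,\ldots,\mcC_m=\mcB$. I would lift this path edge by edge: given a tuple representing $\mcC_{t-1}$ and the edge $\mcC_{t-1}\to\mcC_t$ arising from a symmetric exchange between two bases $P\ne Q$ of $\mcC_{t-1}$, choose any two positions $i<j$ in the current tuple that hold $P$ and $Q$ (in either order) and perform the corresponding exchange; the result represents $\mcC_t$ and is adjacent to the previous tuple in $G'_M(S)$.

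After this lifting, we arrive at some tuple $\mbB'$ whose underlying multiset is $\mcB$, so $\mbB'$ is merely a permutation of $\mbB$. The second step is therefore to show that any two tuples representing the same multiset of bases are connected in $G'_M(S)$, for which it suffices to realize every transposition of positions. So suppose $\mbC=(C_1,\ldots,C_k)$ is a tuple with $C_i\ne C_j$, and we wish to swap $C_i$ and $C_j$ while leaving the other coordinates fixed. Set $I=C_i\cap C_j$ and $U=E(M)-(C_i\cup C_j)$, and consider the minor $N=M/I\del U$. Then $C_i-I$ and $C_j-I$ are disjoint bases of $N$ whose union exhausts $E(N)$, so both $(C_i-I,\,C_j-I,\,\emptyset)$ and $(C_j-I,\,C_i-I,\,\emptyset)$ are vertices of the basis pair graph $G(N)$. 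By hypothesis $G(N)$ is connected, so a path between these two vertices exists, and each of its edges is a symmetric exchange between the two bases.

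Each such exchange in $N$ lifts to $M$ by adjoining $I$ to each basis; translated to tuples, it gives an edge of $G'_M(S)$ that alters only positions $i$ and $j$ and leaves the other $k-2$ coordinates untouched. Running through the lifted path produces a sequence in $G'_M(S)$ from $\mbC$ to the tuple in which $C_i$ and $C_j$ have been swapped. Composing transpositions then connects $\mbB'$ to $\mbB$, completing the chain $\mbA\to\mbB'\to\mbB$ in $G'_M(S)$.

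The main conceptual obstacle is that transpositions of positions are not directly available as edges in $G'_M(S)$, so they have to be effected indirectly as sequences of elementary symmetric exchanges living inside a suitable minor; identifying the minor $M/(C_i\cap C_j)\del(E(M)-(C_i\cup C_j))$, in which $C_i-I$ and $C_j-I$ partition the ground set, is the move that makes the hypothesis on basis pair graphs of minors plug in exactly where needed. One small item to verify along the way is that the edges of $G_M(S)$ used in the first lifting always involve two distinct bases $P\ne Q$ (since one needs an element of $P-Q$ to perform the exchange), so the positions holding $P$ and $Q$ in the current tuple can always be chosen unambiguously.
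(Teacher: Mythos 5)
Your proposal is correct and follows essentially the same route as the paper: reduce to realizing a transposition of two coordinates, then effect it by a path in the basis pair graph of the minor in which $C_i-I$ and $C_j-I$ partition the ground set (your $M/I\backslash U$ is the paper's $M|(C_i\cup C_j)/(C_i\cap C_j)$). You simply spell out in more detail the lifting of a path in $G_M(S)$ to $G'_M(S)$, which the paper leaves implicit.
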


\begin{proof}
  Since $G_M(S)$ is connected, to show that $G'_M(S)$ is connected it
  suffices to show that for each vertex $\mbA =(A_1,A_2,\ldots,A_k)$
  of $G'_M(S)$ and each permutation $\sigma$ of $[k]$, there is a path
  in $G'_M(S)$ from $\mbA$ to
  $\mbA_{\sigma}=(A_{\sigma(1)},A_{\sigma(2)},\ldots,A_{\sigma(k)})$.
  Since every permutation is a composition of transpositions, we focus
  on a transposition $\sigma$, say permuting $i$ and $j$ with $i<j$.
  The desired result follows if we show that there is a path from
  $\mbA$ to $\mbA_{\sigma}$ in which all bases but the $i$-th and
  $j$-th are fixed.  This follow by noting that the sequence of
  symmetric exchanges that gives a path from
  $(A_i-A_j,A_j-A_i,\emptyset)$ to $(A_j-A_i,A_i-A_j,\emptyset)$ in
  the basis pair graph of the minor $M|(A_i\cup A_j)/(A_i\cap A_j)$
  also gives the desired path from $\mbA$ to $\mbA_{\sigma}$ in
  $G'_M(S)$.
\end{proof}

\begin{cor}
  For any minor-closed class of matroids for which
  Conjecture~\ref{conj:faber} holds, Conjectures \ref{conj:white} and
  \ref{conj:white2} are equivalent.  In particular,
  Conjecture~\ref{conj:white2} holds for all sparse paving matroids.
\end{cor}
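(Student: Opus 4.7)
The plan is to observe that this corollary is essentially a packaging of the preceding theorem together with Theorems~\ref{thm:faber} and~\ref{thm:white}, plus one trivial converse. Throughout the proof, let $\mathcal{M}$ be a minor-closed class of matroids for which Conjecture~\ref{conj:faber} holds, and fix $M\in\mathcal{M}$, $k\geq 2$, and a multiset $S$ of size $k\,r(M)$ with elements in $E(M)$.

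For the forward implication, that Conjecture~\ref{conj:white} for $M$ implies Conjecture~\ref{conj:white2} for $M$, I would simply invoke the preceding theorem. Since $\mathcal{M}$ is minor-closed, every minor of $M$ lies in $\mathcal{M}$ and so satisfies Conjecture~\ref{conj:faber}; that is, the basis pair graph of every minor of $M$ is connected. The theorem then says that connectedness of $G_M(S)$ implies connectedness of $G'_M(S)$.

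For the reverse implication, I would give the obvious projection argument: there is a natural surjection $\pi$ from the vertex set of $G'_M(S)$ onto that of $G_M(S)$, sending a $k$-tuple $(A_1,\ldots,A_k)$ to its underlying multiset $\{A_1,\ldots,A_k\}$. An edge of $G'_M(S)$ either maps to an edge of $G_M(S)$ (if the two endpoints have different underlying multisets) or to a loop (if they have the same underlying multiset). Since every vertex of $G_M(S)$ lies in the image of $\pi$, connectedness of $G'_M(S)$ at once yields connectedness of $G_M(S)$.

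The ``in particular'' statement then requires only that we check that sparse paving matroids form a minor-closed class, which is noted in the introduction, and that Conjecture~\ref{conj:faber} holds throughout this class, which is Theorem~\ref{thm:faber}. By the equivalence just established, Conjecture~\ref{conj:white2} for sparse paving matroids follows from Conjecture~\ref{conj:white} for sparse paving matroids, and the latter is Theorem~\ref{thm:white}. There is no real obstacle here; the content of the corollary lies entirely in the preceding theorem, and the proof amounts to citing the relevant earlier results in the right order.
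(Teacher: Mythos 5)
Your proof is correct and matches the paper's (implicit) argument: the corollary is stated without proof precisely because it is the direct combination of the preceding theorem with Theorems~\ref{thm:faber} and~\ref{thm:white}, plus the easy projection argument for the converse direction, all of which you supply accurately. Nothing is missing.
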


For Conjecture~\ref{conj:co}, we start with a lemma.  A
$k$-\emph{interval} in a cycle $\sigma$ is a set of $k$
cyclically-consecutive elements, that is,
$\{x,\sigma(x),\sigma^2(x),\ldots,\sigma^{k-1}(x)\}$ for some $x$.

\begin{lemma}\label{lem:av}
  Let $M$ be a rank-$r$ sparse paving matroid on $n$ elements.  If
  $2r\leq n$, then, over all cycles on $E(M)$, the average number of
  $r$-intervals that are circuit-hyperplanes of $M$ is less than two.
\end{lemma}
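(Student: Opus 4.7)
The plan is to reduce the statement to a bound on the number of circuit-hyperplanes via a symmetry/double-counting argument, and then to bound the circuit-hyperplanes by a standard Johnson-type packing inequality that comes almost for free from the sparse paving hypothesis.

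First I would observe that the symmetric group on $E(M)$ acts on the set of cycles on $E(M)$ while preserving the $r$-interval structure, and it acts transitively on the $r$-subsets of $E(M)$. Hence every $r$-subset of $E(M)$ appears as an $r$-interval in the same number, say $f$, of cycles. Double-counting pairs (cycle, $r$-interval in that cycle) gives $cn = \binom{n}{r}f$, where $c$ is the total number of cycles on $E(M)$ (in whatever convention; the ratio $f/c$ is independent of that choice and the factor $n$ comes from each cycle containing exactly $n$ $r$-intervals). Restricting the right-hand count to circuit-hyperplanes, the average number of $r$-intervals in a cycle that are circuit-hyperplanes of $M$ equals $|\mathcal{C}|\,f/c = |\mathcal{C}|\,n/\binom{n}{r}$, where $\mathcal{C}$ denotes the set of circuit-hyperplanes of $M$.

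Next I would bound $|\mathcal{C}|$. By Lemma~\ref{lem:chb}, any two distinct circuit-hyperplanes of $M$ have symmetric difference at least $4$, so no $(r-1)$-subset of $E(M)$ is contained in two different circuit-hyperplanes. Since each circuit-hyperplane contains exactly $r$ distinct $(r-1)$-subsets, this yields $r|\mathcal{C}| \leq \binom{n}{r-1}$, which rearranges to $|\mathcal{C}| \leq \binom{n}{r}/(n-r+1)$. Substituting into the previous formula bounds the average by $n/(n-r+1)$. The hypothesis $2r \leq n$ gives $n-r+1 \geq r+1 > n/2$, so $n/(n-r+1) < 2$, as required.

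I do not foresee a serious obstacle here: the whole argument is a routine double count once the symmetry is spotted, and the only use of the sparse paving hypothesis is through the minimum symmetric-difference statement in Lemma~\ref{lem:chb}. The one small point to watch is that the strict inequality must persist at the boundary $2r=n$, which it does thanks to the ``$+1$'' appearing in the bound $|\mathcal{C}| \leq \binom{n}{r}/(n-r+1)$.
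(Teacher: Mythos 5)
Your proof is correct, and its overall shape matches the paper's: both compute the average as $\ch(M)\,n/\binom{n}{r}$ and then establish the bound $\ch(M)\le \binom{n}{r}/(n-r+1)$, from which $2r\le n$ gives the strict inequality. The one genuine difference is how that bound on the number of circuit-hyperplanes is obtained. You use a packing (Johnson-type) argument: since any two circuit-hyperplanes have symmetric difference at least $4$ by Lemma~\ref{lem:chb}, no $(r-1)$-subset lies in two of them, so $r\,\ch(M)\le\binom{n}{r-1}$, which is algebraically the same bound. The paper instead double-counts pairs $(H,B)$ with $H$ a circuit-hyperplane, $B$ a basis, and $|H\triangle B|=2$: each circuit-hyperplane lies in exactly $r(n-r)$ such pairs, each basis in at most $r$, and $\ba(M)+\ch(M)=\binom{n}{r}$. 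The two derivations are equally valid; yours is slightly more self-contained (it never needs to count pairs against bases), while the paper's version makes visible the complementary roles of bases and circuit-hyperplanes among $r$-sets, which is the same inequality~(\ref{in:ch}) it reuses in Section~\ref{sec:ncf} when discussing how close the Graham--Sloane construction comes to the maximum. Your closing remark about the boundary case $2r=n$ is exactly the right point to watch, and the ``$+1$'' does carry the strictness.
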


\begin{proof}
  Let $\ba(M)$ and $\ch(M)$ be, respectively, the numbers of bases and
  circuit-hyperplanes of $M$.  By focusing on circuit-hyperplanes, it
  follows that the average of interest is
  \begin{equation*}
    \frac{\ch(M)\, r!\, (n-r)!}{(n-1)!}.
  \end{equation*}
  The desired result follows easily from this expression and the
  assumed inequality, $2r\leq n$, once we show
  \begin{equation}\label{in:ch}
    \ch(M)\leq \frac{1}{n-r+1}\binom{n}{r}.
  \end{equation}
  Consider the pairs $(H,B)$ consisting of a circuit-hyperplane $H$ of
  $M$ and a basis $B$ of $M$ with $|H\triangle B|=2$.  The definition
  of sparse paving gives three properties that yield the inequality
  above: each circuit-hyperplane is in $r(n-r)$ such pairs, each basis
  is in at most $r$ such pairs, and $\ba(M)+\ch(M)=\binom{n}{r}$.
\end{proof}

\begin{thm}\label{thm:co}
  Conjecture~\ref{conj:co} holds for sparse paving matroids.
\end{thm}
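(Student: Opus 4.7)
The ``only if'' direction is a standard double count: in any cyclic
  ordering, each element of a nonempty $A\subseteq E(M)$ lies in
  exactly $r(M)$ of the $n$ $r$-intervals, each of which is a basis
  and so meets $A$ in an independent set of size at most $r(A)$,
  whence $r(M)\ssp|A|=\sum_j|I_j\cap A|\leq n\ssp r(A)$.

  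For the converse I would first reduce to $2\ssp r(M)\leq|E(M)|$ by
  duality. Sparse paving matroids form a dual-closed class; cyclic
  orderability is self-dual, since the $r(M)$-intervals of a cyclic
  ordering of $E(M)$ are bases of $M$ if and only if the complementary
  $(|E(M)|-r(M))$-intervals are bases of $M^*$; and the inequality
  (\ref{eq:unifd}) for $M$ applied to $E(M)-A$ rearranges exactly into
  (\ref{eq:unifd}) for $M^*$ applied to $A$, so the hypothesis is
  self-dual as well. With this reduction in hand, Lemma~\ref{lem:av}
  provides a cyclic ordering $\sigma=(a_1,a_2,\ldots,a_n)$ of $E(M)$
  in which at most one $r$-interval is a circuit-hyperplane; if none
  is, $\sigma$ already witnesses cyclic orderability.

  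Otherwise, let $H=\{a_1,\ldots,a_r\}$ be the unique
  circuit-hyperplane interval.  My plan is to modify $\sigma$ by
  swapping $a_1$ with $a_k$ for a suitable $k$ with $r+1\leq k\leq
  n-r+1$, so that positions $1$ and $k$ lie at cyclic distance at
  least $r$ and no $r$-interval contains both.  For any such $k$,
  Lemma~\ref{lem:chb} forces the new image of $H$ to be a basis, and
  $\{a_2,\ldots,a_{r+1}\}$ is left unchanged.  Each of the remaining
  $2r-1$ altered intervals is a neighbor of a basis and could a priori
  be a circuit-hyperplane; to rule this out I would double count: each
  circuit-hyperplane $H'\neq H$ is ``installed'' by such a swap only
  when some original interval $I_\ell$ satisfies $I_\ell\triangle
  H'=\{a_1,a_k\}$, and using the bound (\ref{in:ch}) on $\ch(M)$
  together with the fact that any two circuit-hyperplanes of a sparse
  paving matroid have symmetric difference at least $4$, I would aim
  to show that the number of bad $(k,H')$ configurations is dominated
  by the $n-2r+1$ admissible values of $k$, with more flexibility
  still if one also varies which element of $H$ is swapped.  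In the
  tight case $n=2r$, only $k=r+1$ is admissible and a direct swap
  merely shifts $H$ by one position, so there I would allow swaps of
  smaller cyclic distance (accepting a larger set of affected
  intervals) or iterate, using the count of circuit-hyperplane
  $r$-intervals as a strictly decreasing potential.

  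The main obstacle is precisely this last step.  Lemma~\ref{lem:av}
  delivers a cycle with at most one bad $r$-interval, but promoting
  ``at most one'' to ``none'' is where the sparse paving
  structure---via Lemma~\ref{lem:chb} and the bound on
  $\ch(M)$---must be pressed into service in a way that averaging
  alone does not furnish.
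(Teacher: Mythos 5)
Your reduction steps match the paper's: the double count for the ``only if'' direction, the duality argument allowing $2r\leq n$, and the use of Lemma~\ref{lem:av} to obtain a cycle with at most one circuit-hyperplane $r$-interval are all exactly what the paper does. The gap is in the step you yourself flag as the main obstacle, and it is a genuine one: the counting argument you propose for the long-range swap does not close as stated. There are $n-2r+1$ admissible positions $k$, but the union bound your plan suggests rules out up to $n-1$ of them (for a fixed element of $H$ being swapped out, each of the $r-1$ other intervals through position $1$ forbids at most one $k$, and each of the $n-r$ intervals avoiding position $1$ forbids at most one $k$), and $n-1 > n-2r+1$ for every $r\geq 1$; allowing all $r$ choices of which element of $H$ to move scales both sides by $r$ and does not help. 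Making this work would require showing that almost all of those ``at most one'' bounds are actually zero, which is a global statement about how circuit-hyperplanes sit near the given cycle that neither Lemma~\ref{lem:chb} nor inequality~(\ref{in:ch}) delivers directly. Worse, in the extremal case $n=2r$ (which duality cannot avoid) the set of admissible long-range swaps is a single transposition that merely relocates $H$, so the entire mechanism is unavailable precisely where the problem is hardest; your fallback of ``smaller swaps plus a decreasing potential'' is not substantiated, and the paper's proof shows why it cannot be a one-move argument: a single local modification can fail, revealing a new circuit-hyperplane, up to five times in succession.

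The paper resolves this differently: rather than one long-distance transposition, it performs a fixed sequence of permutations of only the first four elements ($\sigma_2,\ldots,\sigma_6,\sigma$), at each stage either succeeding or naming a new circuit-hyperplane $H_2,\ldots,H_6$; the accumulated circuit-hyperplanes are then used with Lemma~\ref{lem:chb} to certify, interval by interval, that the final cycle $(2,3,4,1,5,\ldots,n)$ has no bad $r$-interval. This also explains the paper's preliminary reduction to rank and nullity at least three (so that no modified interval differs from its original at both ends), a reduction and the attendant finite check of the small cases that your plan omits. As written, your proposal establishes the setup but not the theorem.
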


\begin{proof}
  As noted after Conjecture~\ref{conj:co}, inequality~(\ref{eq:unifd})
  holds in every cyclically orderable matroid.  The conjecture is easy
  to verify for all sparse paving matroids that have rank or nullity
  at most two (this includes all disconnected sparse paving matroids,
  i.e., $U_{0,n}$, $U_{n,n}$, $U_{n-1,n}\oplus U_{1,1}$,
  $U_{1,n}\oplus U_{0,1}$, and $U_{1,2}\oplus U_{1,2}$; this also
  includes all cases in which inequality~(\ref{eq:unifd}) fails), so
  below we assume that $M$ has rank and nullity at least three.

  We may assume $E(M) = [n]$.  For a cycle $\sigma$ on $E(M)$, all
  $r(M)$-intervals in $\sigma$ are bases of $M$ if and only if their
  complements, all $r(M^*)$-intervals in $\sigma$, are bases of $M^*$,
  so, by replacing $M$ by $M^*$ if needed, we may assume that $2r\leq
  n$ where $r=r(M)$.  By Lemma~\ref{lem:av}, for some cycle, say
  $\sigma_1=(1,2,\ldots,n)$, on $E(M)$, at most one of its
  $r$-intervals is a circuit-hyperplane.  We may assume there is such
  an interval, say
  \begin{equation*}
    H_1=\{4,5,\ldots,r+3\},
  \end{equation*}
  otherwise the desired conclusion holds.  

  Consider $\sigma_2=(1,2,\underline{4},\underline{3},5,\ldots,n)$.
  (To aid the reader, we underline the entries that differ from
  $\sigma_1$.)  Only two of its $r$-intervals differ from their
  counterparts in $\sigma_1$, namely, $\{3,5,6,\ldots,r+3\}$, which is
  a basis (use Lemma~\ref{lem:chb} with $H_1$), and
  \begin{equation*}
    H_2=\{n-r+4,\ldots,n,1,2,4\}.
  \end{equation*}
  If $H_2$ is a basis, then $\sigma_2$ is the cycle we want.  Thus,
  assume that $H_2$ is a circuit-hyperplane.

  We repeatedly apply this type of argument below.  For brevity, for
  each cycle we list its $r$-intervals that differ from their
  counterparts in $\sigma_1$ and, when possible, the
  circuit-hyperplanes that, with Lemma~\ref{lem:chb}, show that these
  intervals are bases.  For brevity, we omit the $r$-interval
  $\{i,5,6,\ldots,r+3\}$, with $i\ne 4$, which is a basis (compare it
  to $H_1$).  Since the permutations $\sigma_i$ below differ from
  $\sigma_1$ in at most four consecutive places, the assumption that
  the nullity of $M$ is at least three implies that an $r$-interval in
  $\sigma_i$ cannot differ from its counterpart in $\sigma_1$ at both
  ends.

  Consider $\sigma_3=(1,\underline{3},\underline{4},\underline{2},
  5,\ldots,n)$.  The relevant intervals are
  \begin{itemize}
  \item[$\diamond$] $\{4,2,5,6,\ldots,r+2\}$ (compare to $H_1$),
  \item[$\diamond$] $\{n-r+4,\ldots,n,1,3,4\}$ (compare to $H_2$), and
  \end{itemize}
  \begin{equation*}
    H_3=\{n-r+3,\ldots,n,1,3\}.
  \end{equation*}
  Thus, $\sigma_3$ has the desired properties unless $H_3$ is a
  circuit-hyperplane, so we assume it is.

  Consider $\sigma_4=(1,\underline{4},3,\underline{2}, 5,\ldots,n)$.
  The relevant intervals are
  \begin{itemize}
  \item[$\diamond$] $\{n-r+4,\ldots,n,1,4,3\}$ and
    $\{n-r+3,\ldots,n,1,4\}$ (compare to $H_3$), and
  \end{itemize}
  \begin{equation*}
    H_4=\{3,2,5,6,\ldots,r+2\}.
  \end{equation*}
  Thus, $\sigma_4$ has the desired properties unless $H_4$ is a
  circuit-hyperplane, so we assume it is.

  Consider $\sigma_5=(\underline{3},\underline{4},\underline{1},
  \underline{2},5,\ldots,n)$.  The relevant intervals are
  \begin{itemize}
  \item[$\diamond$] $\{1,2,5,6,\ldots,r+2\}$ (compare to $H_4$),
  \item[$\diamond$] $\{n-r+4,\ldots,n,3,4,1\}$ (compare to $H_2$),
  \item[$\diamond$] $\{n-r+3,\ldots,n,3,4\}$ and
    $\{n-r+2,\ldots,n,3\}$ (compare to $H_3$), and
  \end{itemize}
  \begin{equation*}
    H_5=\{4,1,2,5,6,\ldots,r+1\}.
  \end{equation*}
  Thus, $\sigma_5$ has the desired properties unless $H_5$ is a
  circuit-hyperplane, so we assume it is.

  Consider $\sigma_6=(\underline{4},\underline{3},\underline{1},
  \underline{2},5,\ldots,n)$.  The relevant intervals are
  \begin{itemize}
  \item[$\diamond$] $\{1,2,5,6,\ldots,r+2\}$ (compare to $H_4$),
  \item[$\diamond$] $\{3,1,2,5,6,\ldots,r+1\}$ (compare to $H_5$),
  \item[$\diamond$] $\{n-r+4,\ldots,n,4,3,1\}$ (compare to $H_2$),
  \item[$\diamond$] $\{n-r+3,\ldots,n,4,3\}$ (compare to $H_3$), and
  \end{itemize}
  \begin{equation*}
    H_6=\{n-r+2,\ldots,n,4\}.
  \end{equation*}
  Thus, $\sigma_6$ has the desired properties unless $H_6$ is a
  circuit-hyperplane, so we assume it is.

  Finally, consider $\sigma=(\underline{2},\underline{3},
  \underline{4}, \underline{1},5,\ldots,n)$.  The relevant intervals
  are
  \begin{itemize}
  \item[$\diamond$] $\{4,1,5,6,\ldots,r+2\}$ (compare to $H_1$),
  \item[$\diamond$] $\{3,4,1,5,6,\ldots,r+1\}$ (compare to $H_5$),
  \item[$\diamond$] $\{n-r+4,\ldots,n,2,3,4\}$ (compare to $H_2$),
  \item[$\diamond$] $\{n-r+3,\ldots,n,2,3\}$ (compare to $H_3$), and
  \item[$\diamond$] $\{n-r+2,\ldots,n,2\}$ (compare to $H_6$).
  \end{itemize}
  Thus, $\sigma$ has the desired properties, which completes the proof.
\end{proof}

We now turn to Conjecture~\ref{conj:cyclic}.

\begin{thm}
  Conjecture~\ref{conj:cyclic} holds for sparse paving matroids.
\end{thm}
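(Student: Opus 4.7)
Since sparse paving matroids form a minor-closed class, I may replace $M$ by $M|(B_1\cup B_2)$ and assume $E(M)=B_1\cup B_2$, so $|E(M)|=2r$. A cyclic ordering of the required form is the same as a choice of orderings $a_1,\ldots,a_r$ of $B_1$ and $c_1,\ldots,c_r$ of $B_2$ such that, for each $k\in\{1,\ldots,r-1\}$, both of the complementary $r$-sets
\[
F_k=(B_1\setminus\{a_1,\ldots,a_k\})\cup\{c_1,\ldots,c_k\}
\qquad\text{and}\qquad
G_k=(B_2\setminus\{c_1,\ldots,c_k\})\cup\{a_1,\ldots,a_k\}
\]
are bases of $M$; the boundary $r$-intervals $F_0=G_r=B_1$ and $F_r=G_0=B_2$ are bases by hypothesis.

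I plan to build the pairs $(a_k,c_k)$ step by step, maintaining the invariant that $F_{k-1}$ and $G_{k-1}$ are both bases. At the $k$-th step, picking $a_k\in F_{k-1}\cap B_1$ and $c_k\in G_{k-1}\cap B_2$ yields $F_k=F_{k-1}-a_k+c_k$ and $G_k=G_{k-1}-c_k+a_k$, which must both be bases. Lemma~\ref{lem:swap}, applied to the bases $F_{k-1},G_{k-1}$ with $a=a_k$ and $X=G_{k-1}\cap B_2$ (of size $r-k+1$), guarantees at least $(r-k+1)-2$ valid choices of $c_k$. So a valid extension is always available when $r-k+1\geq 3$, i.e., for $k=1,\ldots,r-2$; the final step $k=r$ is forced and lands at $F_r=B_2$, $G_r=B_1$, both bases.

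The main obstacle is the penultimate step $k=r-1$, when only two elements $\{a,a'\}=F_{r-2}\cap B_1$ and $\{c,c'\}=G_{r-2}\cap B_2$ remain on each side. If none of the four candidate pairs $(a_{r-1},c_{r-1})$ yields both $F_{r-1}$ and $G_{r-1}$ as bases, then Lemma~\ref{lem:chb} forces the circuit-hyperplanes arising on each of the $F$-side and $G$-side candidate squares to form a perfect matching on $\{a,a'\}\times\{c,c'\}$, with the two matchings being opposite. I will resolve this obstruction by backtracking to step $k=r-2$, where Lemma~\ref{lem:swap} (now with $r-k+1=3$) supplies at least one alternative valid choice of $(a_{r-2},c_{r-2})$, and by arguing that at least one such alternative destroys the opposite-matching configuration by replacing a key circuit-hyperplane with a basis. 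The hardest part of the proof will be verifying that backtracking always succeeds: combining Lemma~\ref{lem:chb} with the fact that distinct circuit-hyperplanes of a sparse paving matroid have symmetric difference at least~$4$, I must show that no consistent choice-path is blocked at the endgame for every attempt.
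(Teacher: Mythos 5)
Your setup is sound: restricting to $M|(B_1\cup B_2)$ is legitimate, the reformulation in terms of the complementary pairs $(F_k,G_k)$ is correct, and Lemma~\ref{lem:swap} does guarantee that the greedy construction extends through step $k=r-2$. You have also correctly identified that the only possible failure is at step $k=r-1$, and your description of the failure configuration (two opposite perfect matchings of circuit-hyperplanes on $\{a,a'\}\times\{c,c'\}$) is accurate. But the argument stops exactly where the real difficulty begins: the claim that backtracking to step $k=r-2$ always escapes this configuration is asserted, not proved. Observe that whether step $r-1$ can succeed depends only on which two elements of $B_1$ and which two of $B_2$ remain --- in the bad configuration the four circuit-hyperplanes are $(B_2-c')\cup a'$, $(B_2-c)\cup a$, $(B_1-a')\cup c$, and $(B_1-a)\cup c'$, independent of the order in which the earlier elements were placed --- so what you must actually show is that among the leftover pairs reachable by a valid greedy path, at least one avoids the bad configuration. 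Lemma~\ref{lem:swap} controls the existence of a greedy continuation but not which leftover pairs are reachable, and you have not ruled out the possibility that every admissible alternative at step $r-2$ either fails to make $F_{r-2},G_{r-2}$ bases or lands in another bad configuration; a priori this could force backtracking through several levels. As written, this is a plan with a gap at its crux, not a proof.

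For comparison, the paper takes a different route: it first invokes the sequential basis-exchange property to produce a cycle in which every $r$-interval of the form $\{b_i,\ldots,b_r,c_1,\ldots,c_{i-1}\}$ is a basis (a ``properly starting'' cycle), and then repeatedly applies local transpositions of two or three consecutive elements to strictly decrease the number of ``problem'' intervals on the other side, using Lemma~\ref{lem:chb} to check that each candidate transposition disturbs only a few intervals. Even there, the hard case requires assuming that specific sets $S_0,S_1,S_2$ are circuit-hyperplanes and verifying that a third candidate cycle $\sigma_3$ then works; this is precisely the kind of case analysis your deferred ``backtracking'' step would have to supply, so the omission is not a routine verification.
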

  
\begin{proof}
  Consider disjoint bases $B=\{b_1,b_2,\ldots,b_r\}$ and
  $C=\{c_1,c_2,\ldots,c_r\}$ of a sparse paving matroid $M$.  By the
  basis-exchange property, we may assume that in the cycle
  $$\sigma=(b_1,b_2,\ldots,b_r,c_1,c_2,\ldots,c_r),$$ every $r$-interval
  of the form $\{b_i,b_{i+1},\ldots,b_r,c_1,\ldots,c_{i-1}\}$ is a
  basis; such cycles are said to \emph{start properly}.  We say that a
  \emph{problem occurs at $c_i$} if
  $\{c_i,c_{i+1},\ldots,c_r,b_1,\ldots,b_{i-1}\}$ is not a basis;
  clearly, $i>1$.  We will show how, if a problem occurs at $c_i$,
  then we can switch a few elements so that the number of problems
  decreases and the cycle starts properly; iterating this procedure
  produces the desired cycle.

  First assume $1<i<r$.  We will show that one of the following cycles
  starts properly and has fewer problems (we underline the few
  elements that are permuted):
  \begin{enumerate}
  \item[] $\sigma_1=(b_1,b_2,\ldots,b_r,c_1,c_2,\ldots,
    \underline{c_i}, \underline{c_{i-1}},\ldots,c_r)$,
  \item[] $\sigma_2=(b_1,b_2,\ldots,\underline{b_i},
    \underline{b_{i-1}}, \ldots,b_r,c_1,c_2, \ldots,c_r)$,
  \item[] $\sigma_3=(b_1,b_2,\ldots,b_r,c_1,c_2,\ldots,
    \underline{c_{i+1}}, \underline{c_{i-1}},
    \underline{c_i},\ldots,c_r)$.
  \end{enumerate}
  Since $S_0=\{c_i,c_{i+1},\ldots,c_r,b_1,\ldots,b_{i-1}\}$ is a
  circuit-hyperplane, Lemma~\ref{lem:chb} implies that
  $\{c_{i-1},c_{i+1},\ldots,c_r,b_1,\ldots,b_{i-1}\}$ is a basis.
  Only one other $r$-interval in $\sigma_1$ differs from its
  counterpart in $\sigma$, namely,
  $S_1=\{b_i,\ldots,b_r,c_1,\ldots,c_{i-2},c_i\}$, so it follows that
  $\sigma_1$ starts properly and has fewer problems than $\sigma$
  unless $S_1$ is a circuit-hyperplane.  Assume $S_1$ is a
  circuit-hyperplane.  Only two $r$-intervals in $\sigma_2$ differ
  from their counterparts in $\sigma$; of these, the set
  $\{c_i,c_{i+1},\ldots,c_r,b_1,\ldots,b_{i-2},b_i\}$ is a basis by
  Lemma~\ref{lem:chb} (compare it to $S_0$); if its complement,
  $S_2=\{b_{i-1},b_{i+1},\ldots,b_r,c_1,\ldots,c_{i-1}\}$, is a basis,
  then $\sigma_2$ starts properly and has fewer problems than
  $\sigma$, so we may assume that $S_2$ is also a circuit-hyperplane.
  Four $r$-intervals in $\sigma_3$ differ from their counterparts in
  $\sigma$, namely,
  $$T_1=\{c_{i-1},c_i,c_{i+2},\ldots,c_r,b_1,\ldots,b_{i-1}\}, \quad
  T_2=\{c_i,c_{i+2},\ldots,c_r,b_1,\ldots,b_i\},$$ and their
  complements.  Each of these sets is a basis by Lemma~\ref{lem:chb}
  since each symmetric difference $T_1\triangle S_0$, $T_2\triangle
  S_0$, $(E(M)-T_1)\triangle S_1$, and $(E(M)-T_2)\triangle S_2$ has
  two elements, so $\sigma_3$ starts properly and has fewer problems
  than $\sigma$.

  Now assume $i=r$, so $S_0=\{c_r,b_1,\ldots,b_{r-1}\}$ is a
  circuit-hyperplane.  Consider
  \begin{enumerate}
  \item[] $\sigma_1=(b_1,b_2,\ldots,b_r,c_1,c_2,\ldots,
    \underline{c_r}, \underline{c_{r-1}})$,
  \item[] $\sigma_2=(b_1,b_2,\ldots,\underline{b_r},
    \underline{b_{r-1}},c_1,c_2, \ldots,c_r)$,
  \item[] $\sigma_3=(b_1,b_2,\ldots,b_r,c_1,c_2,\ldots,
    \underline{c_{r-1}}, \underline{c_r}, \underline{c_{r-2}})$.
  \end{enumerate}
  An argument similar to that above shows that $\sigma_1$ starts
  properly and has fewer problems than $\sigma$ unless $S_1 =
  \{b_r,c_1,c_2,\ldots,c_{r-2},c_r\}$ is a circuit-hyperplane;
  likewise, $\sigma_2$ starts properly and has fewer problems than
  $\sigma$ unless $S_2 = \{{b_{r-1}},c_1,c_2, \ldots,c_{r-1}\}$ is a
  circuit-hyperplane.  Assume both $S_1$ and $S_2$ are
  circuit-hyperplanes.  Only four $r$-intervals in $\sigma_3$ differ
  from their counterparts in $\sigma$, namely:
  $$T_1=\{c_r,c_{r-2},b_1,\ldots,b_{r-2}\},\quad
  T_2=\{c_{r-2},b_1,\ldots,b_{r-1}\},$$ and their complements.  These
  sets are bases since $T_1\triangle S_0$, $T_2\triangle S_0$,
  $(E(M)-T_1)\triangle S_2$, and $(E(M)-T_2)\triangle S_1$ each have
  two elements, so $\sigma_3$ is the desired cycle on $B\cup C$.
\end{proof}

\section{Sparse Paving Matroids and the Number of Cyclic
  Flats}\label{sec:ncf}

A set $X$ in a matroid $M$ is \emph{cyclic} if $M|X$ has no coloops.
Such sets are precisely the (possibly empty) unions of circuits of
$M$.  Let $\mcZ(M)$ be the set of cyclic flats of $M$.  As noted
in~\cite{aff}, the cyclic flats, along with their ranks, determine the
matroid; indeed, this data can be seen as distilling the essential
geometric information about a matroid (see \cite{st,int} for
constructions that exploit this perspective).  Cyclic flats play many
roles in matroid theory, especially in the theory of transversal
matroids (see, e.g., \cite{trl,fund,aff,ing}). 

Let $z_n$ be $\max\{|\mcZ(M)|\,:\,|E(M)|=n\}$, that is, $z_n$ is the
greatest number of cyclic flats that any matroid on $n$ elements can
have.  In~\cite{cyc}, the problem of finding $z_n$ was raised.  The
importance of this problem stems from the fact that cyclic flats and
their ranks generally provide a relatively compact description of a
matroid.

To deduce a simple upper bound on $z_n$, let $a_i$ be the number of
$i$-element cyclic flats in a matroid $M$ with $|E(M)|=n$.  Note that
for $F\in\mcZ(M)$ and $e\in F$, the closure $\cl(F-e)$ is $F$; also,
for $x\in E(M)-F$, the restriction $M|(F\cup x)$ has a unique coloop,
namely $x$.  It follows that the sets $F$ and $F-e$, with $F\in
\mcZ(M)$ and $e\in F$, all differ, as do the sets $F$ and $F\cup x$
with $F\in\mcZ(M)$ and $x\in E(M)- F$.  Thus,
\begin{equation*}
  \sum_{i=0}^n a_i(i+1)\leq 2^n \qquad \text{ and } \qquad
  \sum_{i=0}^n a_i(n-i+1)\leq 2^n.
\end{equation*} 
Adding these inequalities gives $(n+2)\,|\mcZ(M)|\leq 2^{n+1}$, so
$z_n\leq 2^{n+1}/(n+2)$.  We next review a construction that yields
sparse paving matroids in which the number of cyclic flats is not so
far from this bound.

As mentioned in Section~\ref{sec:intro}, Knuth~\cite{knuth}
constructed a family of at least $\bigl(2^{\binom{n}{\lfloor n/2
    \rfloor}/2n}\bigr)/n!$ nonisomorphic sparse paving matroids of
rank $\lfloor n/2 \rfloor$ on $n$ elements.  To do this, he showed
that there is a sparse paving matroid of rank $\lfloor n/2 \rfloor$ on
$n$ elements with at least $\binom{n}{\lfloor n/2 \rfloor}/2n$
circuit-hyperplanes; the circuit-hyperplane relaxations of this
matroid, taking into account potential isomorphisms, give the family.

While exploring an equivalent problem in the context of coding theory,
Graham and Sloane~\cite{gs} generalized and strengthened Knuth's
result by showing that for each rank $r$ with $r\leq n$, there is a
sparse paving matroid of rank $r$ on $n$ elements with at least
$\binom{n}{r}/n$ circuit-hyperplanes.  Their construction, which we
sketch, has the same general flavor as Knuth's.  Partition the set of
all $0$-$1$ vectors $(a_0,a_1,\ldots,a_{n-1})$ of length $n$ with $r$
ones into $n$ classes according to the remainder, modulo $n$, of the
sum of the positions that contain ones, i.e., $\sum_i a_i\,i$.  They
noted that any two vectors in the same class differ in at least four
places.  At least one of the classes has at least $\binom{n}{r}/n$
vectors; by interpreting these vectors as the characteristic functions
of the circuit-hyperplanes, this class defines a sparse paving matroid
with at least $\binom{n}{r}/n$ circuit-hyperplanes.

The cyclic flats of a sparse paving matroid $M$ having rank and
nullity at least two are $\emptyset$, $E(M)$, and its
circuit-hyperplanes.  A routine induction (treating even $n$ and odd
$n$ separately) shows $\binom{n}{\lfloor n/2 \rfloor}\geq
2^{n-1}/\sqrt{n}$ (consistent with Stirling's approximation).  Thus,
it follows from Graham and Sloane's work that some sparse paving
matroid on $n$ elements has at least $2^{n-1}/n^{3/2} + 2$ cyclic
flats.  (For large $n$, the numbers of cyclic flats in these examples
far surpass those mentioned in~\cite{cyc}.)  We summarize these
remarks in the result below, which, if we apply $\log_2$ to each term
in the inequality, bears a strong resemblance to
inequality~(\ref{eq:bound}).

\begin{thm}\label{thm:zn}
  The maximum number of cyclic flats among matroids on $n$ elements,
  $z_n$, satisfies
  $$\frac{2^{n-1}}{n^{3/2}}+2\leq z_n \leq \frac{2^{n+1}}{n+2}.$$
\end{thm}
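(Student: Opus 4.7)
The proof essentially assembles facts already laid out in the discussion preceding the statement, so the plan is to organize them into the two bounds.

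For the upper bound, I would argue exactly as in the paragraph above the theorem: let $a_i$ be the number of $i$-element cyclic flats of a matroid $M$ on $n$ elements. The key observation is that for $F \in \mathcal{Z}(M)$ and $e \in F$, the set $F - e$ is not cyclic (since $\cl(F - e) = F$ forces $e$ to be in the closure, but even more directly, $F - e$ cannot itself be a cyclic flat because its closure would have to be $F$); similarly, for $x \in E(M) - F$, the set $F \cup x$ has $x$ as a coloop and so is not cyclic. Thus the sets $\{F : F \in \mathcal{Z}(M)\} \cup \{F - e : F \in \mathcal{Z}(M), e \in F\}$ are pairwise distinct subsets of $E(M)$, giving $\sum_i a_i(i+1) \leq 2^n$; dually, $\sum_i a_i(n-i+1) \leq 2^n$. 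Adding yields $(n+2)|\mathcal{Z}(M)| \leq 2^{n+1}$, hence $z_n \leq 2^{n+1}/(n+2)$.

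For the lower bound, I would invoke the Graham--Sloane construction recalled in the preceding paragraph: for every $r$ with $2 \leq r \leq n-2$ there exists a sparse paving matroid $M$ of rank $r$ on $n$ elements with at least $\binom{n}{r}/n$ circuit-hyperplanes. Since $M$ has rank and nullity at least two, its cyclic flats are precisely $\emptyset$, $E(M)$, and its circuit-hyperplanes, so $|\mathcal{Z}(M)| \geq \binom{n}{r}/n + 2$. Specializing to $r = \lfloor n/2 \rfloor$ and invoking the elementary estimate $\binom{n}{\lfloor n/2 \rfloor} \geq 2^{n-1}/\sqrt{n}$ (a routine induction separating the parities of $n$) yields $z_n \geq 2^{n-1}/n^{3/2} + 2$, as required.

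There is really no main obstacle here, since the two halves are independent and each has already been sketched in the narrative. The only small care needed is in the upper-bound step, namely verifying that the sets $F - e$ (and dually $F \cup x$) are actually distinct from all the cyclic flats themselves and from each other as $F$ and $e$ vary; this follows from the fact that a cyclic flat $F$ is recovered from $F - e$ as the closure and from $F \cup x$ by deleting the unique coloop of $M|(F \cup x)$, so one can reconstruct $(F, e)$ and $(F, x)$ uniquely from the respective sets. The lower-bound step requires nothing beyond citing the Graham--Sloane construction and a one-line binomial estimate, both of which are standard.
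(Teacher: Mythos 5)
Your proposal is correct and follows essentially the same route as the paper: the upper bound via the injections $F\mapsto F-e$ and $F\mapsto F\cup x$ giving $\sum_i a_i(i+1)\leq 2^n$ and $\sum_i a_i(n-i+1)\leq 2^n$, and the lower bound via the Graham--Sloane sparse paving matroids of rank $\lfloor n/2\rfloor$ together with the estimate $\binom{n}{\lfloor n/2\rfloor}\geq 2^{n-1}/\sqrt{n}$. The only cosmetic point is that the reason $F-e$ is new is that it fails to be a \emph{flat} (its closure is $F$), not that it fails to be cyclic; your parenthetical already contains the right justification.
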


To close, we note that Graham and Sloane's examples cannot be
substantially improved upon within the class of sparse paving
matroids.  The sparse paving matroids that they construct have
$\binom{n}{\lfloor n/2 \rfloor}/n$ circuit-hyperplanes.  It is routine
to check that the right side of inequality (\ref{in:ch}) above is
maximized when $r = \lfloor n/2\rfloor$.  The ratio of this upper
bound to the number of circuit-hyperplanes in Graham and Sloane's
examples tends to $2$ as $n$ goes to infinity.  (Also see~\cite[Remark
2]{gs}.) This supports the natural suspicion that the lower bound in
Theorem \ref{thm:zn} is close to optimal.

\vspace{5pt}

\begin{center}
  \textsc{Acknowledgements}
\end{center}

\vspace{3pt}

I thank Anna de Mier and Jay Schweig for discussions about
Conjectures~\ref{conj:faber}--\ref{conj:cyclic} and for helpful
comments about this paper, and Gordon Royle for drawing my attention
to the results in~\cite{gs} and the possibility that sparse paving
matroids may maximize the number of cyclic flats.

\end{document}